\RequirePackage{fix-cm}
\documentclass[smallextended]{svjour3}       
\smartqed  
%
\usepackage{array,tikz,amsfonts,mathtools}
\usepackage[colorlinks=true,linkcolor=blue,citecolor=blue,urlcolor=blue]{hyperref}
\usepackage{subcaption,cite,algorithm,algpseudocode}
%

\newcommand{\N}{\mathbb{N}}
\newcommand{\R}{\mathbb{R}}

\DeclareMathOperator*{\argmin}{argmin}
\DeclareMathOperator{\relint}{relint}
\DeclareMathOperator{\TV}{TV}
\newcommand{\prox}{\operatorname{prox}}

\newcommand{\dom}{\operatorname{dom}}


\newcommand{\mL}{\mathcal{L}}


\newtheorem{assumption}{Assumption}

\usepackage{bm}

\def \tPlotWidth {.2\paperwidth}
\def \qPlotWidth {.28\paperwidth}
\usepackage{cancel}

\graphicspath{{plots/}{images/}}

%
\begin{document}

\title{A Preconditioned Version of a Nested Primal-Dual Algorithm for Image Deblurring}

\titlerunning{Preconditioned Nested Primal-Dual Algorithm}        

\author{Stefano Aleotti \and Marco Donatelli \and Rolf Krause \and Giuseppe Scarlato}

\authorrunning{Aleotti et al.} 

\institute{Stefano Aleotti \and Marco Donatelli \and Giuseppe Scarlato \at
    Department of Science and High Technology, University of Insubria, Como, 22100, Italy\\
    \email{saleotti@uninsubria.it, marco.donatelli@uninsubria.it, gscarlato@uninsubria.it}
    \and
    Rolf Krause \at
    AMCS, CEMSE, King Abdullah University of Science and
    Technology, Thuwal, 23955-6900, Kingdom of Saudi Arabia\\
    Euler Institute, Faculty of Informatics, Università della Svizzera Italiana, Lugano, 6900, Switzerland\\
    Faculty of Mathematics and Informatics, UniDistance, Brig, 3900, Switzerland\\
    \email{rolf.krause@kaust.edu.sa}
}

\date{}

\maketitle

\begin{abstract}
    Variational models for image deblurring problems typically consist of a smooth term and a potentially non-smooth convex term. A common approach to solving these problems is using proximal gradient methods. To accelerate the convergence of these first-order iterative algorithms, strategies such as variable metric methods have been introduced in the literature.

    In this paper, we prove that, for image deblurring problems, the variable metric strategy proposed in \cite{NPDIT} can be reinterpreted as a right preconditioning method. Consequently, we explore an inexact left-preconditioned version of the same proximal gradient method. We prove the convergence of the new iteration to the minimum of a variational model where the norm of the data fidelity term depends on the preconditioner. The numerical results show that left and right preconditioning are comparable in terms of the number of iterations required to reach a prescribed tolerance, but left preconditioning needs much less CPU time, as it involves fewer evaluations of the preconditioner matrix compared to right preconditioning. The quality of the computed solutions with left and right preconditioning are comparable.
    Finally, we propose some non-stationary sequences of preconditioners that allow for fast and stable convergence to the solution of the variational problem with the classical $\ell^2$--norm on the fidelity term.
    \keywords{Ill-posed problems \and Image deblurring \and Convex optimization \and Preconditioning}
    \subclass{65K10 \and 65F22}
\end{abstract}

\section{Introduction}

We consider the general minimization problem
\begin{equation}\label{eq:problem_formulation}
    \underset{u\in\mathbb{R}^d}{\operatorname{argmin}} \ f(u) + h(Wu),
\end{equation}
where \( f:\mathbb{R}^d\rightarrow \mathbb{R} \) is a convex and smooth function, \( h:\mathbb{R}^{d'}\rightarrow \mathbb{R}\cup\{\infty\} \) is convex and possibly non-smooth, and \( W\in\mathbb{R}^{d'\times d} \) is a linear operator. Problems like \eqref{eq:problem_formulation} frequently occur in inverse problems related to imaging, such as image deblurring, denoising, computed tomography, and others \cite{Bach-2012,Bertero1998b,Chambolle-Pock-2016}. In particular, we focus on image deblurring, described by the model equation
\begin{equation}\label{eq:Model_Equation}
    Au = b^\delta,
\end{equation}
where \( A\in\mathbb{R}^{s\times d} \) represents the discretization of a space invariant convolution operator, \( b^{\delta}\in\mathbb{R}^{s} \) contains the observed image corrupted by white Gaussian noise $\eta_{\delta}$, and \( u\in\mathbb{R}^d \) denotes an unknown two-dimensional image with \( d \) pixels. We assume that \( b^{\delta} \) satisfies
\begin{equation}\label{eq:bdelta}
    b^{\delta} = b + \eta_{\delta}, \qquad \|b^{\delta} - b\| \leq \delta,
\end{equation}
where the vector \( b \) represents the unobserved noise-free data, $\|\cdot\|$ denotes the Euclidean norm, and \( \delta > 0 \) serves as an upper bound of the noise level.

The ill-posed nature of the operator \( A \) and the presence of noise requires a regularization strategy to solve the problem \eqref{eq:Model_Equation}, see \cite{HNO,Engl1996-yp,Bertero1998b}. A common variational approach applied to equation \eqref{eq:Model_Equation} involves solving the optimization problem
\begin{equation}\label{eq:problem_leastsquares}
    \underset{u\in\mathbb{R}^d}{\operatorname{argmin}} \ \frac{1}{2}\|Au - b^{\delta}\|^2 + h(Wu),
\end{equation}
which is a specific instance of the model problem \eqref{eq:problem_formulation} with the differentiable part defined as \( f(u) = \frac{1}{2}\|Au - b^{\delta}\|^2 \). Here, \( f \) is referred to as the \textit{data fidelity} term, measuring the discrepancy between the observed data and the model. The nondifferentiable term \( h \circ W = h(W \cdot)\) acts as a penalty, imposing structure or prior knowledge on the solution. Typically, the influence of these two terms in problem \eqref{eq:problem_leastsquares} is balanced by the action of a regularization parameter \( \lambda > 0 \). For simplicity, in the following, we incorporate the regularization parameter into the definition of the convex function \( h \).

A particularly useful way to compute approximate solutions of the initial problem \eqref{eq:problem_formulation} is to employ proximal gradient methods \cite{Beck-Teboulle-2009b,Combettes-Wajs-2005,Daubechies-et-al-2004}, which are first-order algorithms. These methods achieve mild-to-moderate accuracy while maintaining a low computational cost per iteration, especially when the dimensionality \( d \) is large. The iterative scheme involves alternating a gradient step on the differentiable part \( f \) and a proximal evaluation on the non-smooth term \( h\circ W \), specifically
\begin{align}
    u_{n+1} & = \operatorname{prox}_{\alpha h \circ W}(u_n - \alpha \nabla f(u_n)),
    \label{eq:pg_method}
\end{align}
where $\alpha > 0$ is the step length parameter along the descent direction \( -\nabla f(u_n) \), and \( \operatorname{prox}_{\alpha h \circ W} \) is the proximal operator associated with the non-smooth term \( \alpha h\circ W \). Convergence of the method \eqref{eq:pg_method} to a solution of \eqref{eq:problem_formulation} is guaranteed if \( f \) has an \( L- \)Lipschitz continuous gradient and \( \alpha \) is chosen to be smaller than \( 2/L \) (e.g. \cite{Combettes-Wajs-2005}).

However, proximal gradient methods have two main drawbacks. First, if the chosen step length \( \alpha \) is too small, then the convergence may slow down, particularly when only a rough estimate of the Lipschitz constant \( L \) is available. A potential remedy for this issue is to accelerate the scheme by using either a variable metric approach that incorporates some second-order information of the differentiable part \cite{Bonettini-et-al-2016a,Chouzenoux-etal-2014,Frankel-etal-2015,Ghanbari-2018,Lee2018}, or by adding an extrapolation step that leverages information from previous iterations \cite{Beck-Teboulle-2009a,Ochs-etal-2014}. For example, when \( h \) is the \( \ell^1 \)-norm, a popular algorithm incorporating an extrapolation step is the Fast Iterative Soft Thresholding Algorithm (FISTA) \cite{Beck-Teboulle-2009b}.

The second drawback is that the scheme \eqref{eq:pg_method} assumes that \( \operatorname{prox}_{\alpha h \circ W} \) can be computed in closed form, which is not admissible for several regularization terms, such as Total Variation \cite{Bach-2012,Polson-et-al-2015}. To address this, splitting approaches like primal-dual methods avoid the explicit computation of \( \operatorname{prox}_{\alpha h \circ W} \) by reformulating \eqref{eq:problem_formulation} as a convex-concave saddle point problem \cite{Chambolle10,Malitsky-Pock-2018,Chambolle-et-al-2024}. Moreover, if the convex conjugate \( h^*(v) = \sup_{w\in\mathbb{R}^{d'}}\langle v,w\rangle - h(w) \) has an easily computable proximal operator, then the desired proximal approximation can be computed using a \textit{primal-dual} inner routine \cite{Bonettini-et-al-2016a,Schmidt2011,Villa-etal-2013} that involves only the computation of \( \nabla f(u_n) \), \( \operatorname{prox}_{h^*} \), and matrix-vector products with $W$ and $W^T$ \cite{Bonettini2018a,Chen-et-al-2018,Villa-etal-2013}.

This work focuses on a preconditioning strategy for the variational model~\eqref{eq:problem_leastsquares}. Since the gradient descent method is generally slow, we aim to replace the gradient step in the proximal evaluation with a higher-order iterative method. To this end, one could premultiply the descent direction $-\nabla f(u)$ with a suitable preconditioner $P$. Given that the model \eqref{eq:problem_leastsquares} is closely related to the linear system \eqref{eq:Model_Equation}, standard preconditioning strategies can be applied. Specifically, given an easily invertible matrix $P$ that approximates $A$ and is well-conditioned, one can apply left or right preconditioning to the linear system \eqref{eq:Model_Equation} as follows:
\begin{align*}
    P^{-1}Au  & = P^{-1}b^{\delta}, & \text{(left preconditioning)},  \\
    AP^{-1}Pu & = b^{\delta},       & \text{(right preconditioning)}.
\end{align*}

In the following, we consider proximal gradient methods applied to problem~\eqref{eq:problem_leastsquares}, where acceleration techniques based on left preconditioning and extrapolation are combined, and inexact proximal evaluations are allowed, resulting in an iterative scheme of the form
\begin{equation}\label{eq:pg_VM}
    \begin{cases}
        \bar{u}_n = u_n + \gamma_n(u_n - u_{n-1}) \\
        u_{n+1} \approx \operatorname{prox}_{\alpha_n h \circ W}\left(\bar{u}_n - \alpha_n P_n^{-1}A^T(A\bar{u}_n - b^{\delta})\right),
    \end{cases} \quad \forall \ n\geq 0,
\end{equation}
where \( P_n\in\mathbb{R}^{d\times d} \) is preconditioning matrix and \( \gamma_n \geq 0 \) is the extrapolation parameter. The symbol ``\(\approx\)'' denotes an approximation of the proximal gradient point, which is required whenever the proximal operator cannot be computed in closed form.

While the step length \( \alpha_n \) represents the inverse of a local Lipschitz constant of the gradient, the matrix \( P_n \) aims to capture some second-order information of the smooth part of the objective function. Practical choices for \( P_n \) include regularized versions of the Hessian matrix \cite{Lee2014,Yue2019} or Hessian approximations based on Quasi-Newton strategies \cite{Ghanbari-2018,Jiang-2012,Kanzow-et-al-2022,Lee2018}. The extrapolation parameter \( \gamma_n \) is typically computed according to a predetermined sequence, originally proposed by Nesterov \cite{Nesterov-1983} for smooth problems and later adapted to non-smooth problems by Beck and Teboulle \cite{Beck-Teboulle-2009b}. Lastly, the approximation of the proximal operator in \eqref{eq:pg_VM} is typically achieved by means of a \textit{nested} iterative solver, which is applied, at each iteration, to the minimization problem associated with the computation of the proximal--gradient point.

Many different strategies exist for solving problem \eqref{eq:problem_leastsquares} and, more generally, the variational problem \eqref{eq:problem_formulation}. In \cite{Bonettini-et-al-2023a}, a \textit{Nested Primal-Dual} (NPD) method was introduced for convex composite optimization problems. This algorithm can be seen as a specific instance of the iterative scheme \eqref{eq:pg_VM} with the identity matrix as preconditioner \( P \). It is characterized by using a dual sequence at each iteration to obtain a good approximation of the proximity operator for the non-smooth term. However, the NPD method exhibits slow convergence towards the minimum point, a drawback that can be addressed by employing strategies such as variable metric approaches \cite{NPDIT}.

Variable metric strategies are commonly employed not only to accelerate the convergence of the NPD method but also in primal-dual methods, such as the preconditioned version of the Chambolle-Pock algorithm \cite{Pock-Chambolle-2011}. This algorithm reformulates the initial problem \eqref{eq:problem_formulation} as a saddle point problem and applies a primal-dual strategy, whose convergence is enhanced by using a preconditioner or variable metric. However, \cite{NPDIT} demonstrated that these methods can be surpassed by introducing scaling matrices in a nested framework. Their analysis and comparison focused on solving a Total Variation regularization problem for image deblurring, which takes the form
\begin{equation}\label{eq:TV}
    \argmin_{u\in\mathbb{R}^d} \frac{1}{2}\|Au - b^{\delta}\|^2 + \lambda \TV(u),
\end{equation}
where \( \TV(u) \) denotes the Total Variation functional \cite{Rudin-Osher-Fatemi-1992}. The scaling matrix used in the numerical results in \cite{NPDIT} is
\begin{equation}\label{eq:Tikhonov_like}
    P_n = A^TA + \nu_n I_d, \quad \forall \ n\geq 0,
\end{equation}
where \( I_d \in \mathbb{R}^{d\times d} \) is the identity matrix and \( \{\nu_n\}_{n\in\mathbb{N}} \) is a sequence of positive numbers such that $P_n$ is positive definite. This choice for the preconditioner draws from regularizing preconditioning techniques used in image deblurring, particularly the Iterated Tikhonov method \cite{Huang-etal-2013, donatelli2013fast, cai2016regularization}, and for this reason, the algorithm was called \textit{Nested Primal-Dual Iterated Tikhonov} (NPDIT). Although this algorithm shows excellent performance when applied to problem \eqref{eq:TV}, it suffers from high computational demand when high accuracy is required for approximating the proximity operator. In this work, we address this issue by noting that a variable metric approach reduces to a right preconditioning strategy when considering optimization problems of the form~\eqref{eq:problem_leastsquares}. Iterative schemes like \eqref{eq:pg_VM} are based on left preconditioning strategies, resulting in faster algorithms since the scaling matrix, or equivalently, the preconditioner \( P_n \), does not affect the dual sequence used in the nested iteration. Our numerical results also demonstrate that the reconstruction quality is comparable to that achieved by the NPDIT algorithm. Moreover, we show that a sufficiently accurate proximal evaluation is necessary if the shifting parameter \( \nu_n \) is small or if one aims to eliminate the extrapolation step. In this regard, with appropriate choices of the non-smooth term \( h\circ W \), the preconditioner \( P_n \), and the extrapolation parameter \( \gamma_n \), the scheme~\eqref{eq:pg_VM} aligns with standard algorithms such as ISTA, FISTA, and ITTA \cite{Daubechies-et-al-2004, Beck-Teboulle-2009b, Huang-etal-2013}.

This paper is organized as follows: Section \ref{sec:preliminaries} introduces the basic notions and results of convex analysis, which are used in Section \ref{sec:primal--dual} to define and analyze a primal-dual approach and a variable metric framework. In Section \ref{eq:secright}, we demonstrate that employing the right preconditioning strategy for problem~\eqref{eq:problem_leastsquares} is equivalent to adopting a variable metric approach, provided that the preconditioner \( P_n \) is chosen appropriately. Section \ref{ch:PNPD} presents our proposal, based on the left preconditioning approach, along with convergence results and possible strategies for selecting a suitable preconditioner \( P_n \). Section \ref{sec:numerical_results} is centered on the numerical tests, especially comparing our proposal with the NPDIT method. Finally, Section \ref{sec:concl} is devoted to conclusions.

\section{Preliminaries}\label{sec:preliminaries}
This section aims to recall some well-known definitions and properties of convex analysis that will be exploited throughout the paper.
Before that, we introduce some useful notations that will be used consistently.

Denoting by 
$\mathcal{S}_+(\R^d)$ the set of all $d\times d$ real symmetric positive definite matrices and given $\eta>0$, we define the subset $\mathcal{D}_{\eta}\subseteq \mathcal{S}_+(\R^{d})$ as containing all matrices in $\mathcal{S}_+(\R^{d})$ with eigenvalues greater than or equal to $\eta$. Moreover, given $M\in \mathcal{S}_+(\R^d)$, the norm induced by $M$ is defined as
$\|u\|_{M} = \sqrt{u^TMu}$, $\forall u\in\R^d$.

For a function $f:\R^d\rightarrow\R\cup\{\infty\}$, we define the domain of $f$ as the set $\operatorname{dom}(f) := \{x\in\R^d : f(x)<\infty\}$.
Finally, we recall that the relative interior of $\Omega\subseteq \R^n$ is the set
$ \operatorname{relint}(\Omega) = \{u\in\Omega:  \exists \ \epsilon>0 \ \text{s.t.} \ B(u,\epsilon)\cap \operatorname{aff}(\Omega)\subseteq \Omega\}$,
where $B(u,\epsilon)$ is the ball centered at $u$ with radius $\epsilon$, and $\operatorname{aff}(\Omega)$ is the affine hull of $\Omega$.

\begin{definition}
    A function $f:\R^d\rightarrow \R$ is said to have a $L-$Lipschitz continuous gradient if the following property holds
    \begin{equation*}
        \|\nabla f(\tilde{u})-\nabla f(\bar{u})\|\leq L\|\tilde{u}-\bar{u}\|, \quad \forall \ \tilde{u},\bar{u}\in\R^d.
    \end{equation*}
\end{definition}

\begin{definition}
    Let $\varphi:\R^d\rightarrow\R\cup\{\infty\}$ be a proper convex and lower semicontinuous function. The subdifferential of $\varphi$ at point $u\in\R^d$ is defined as the~set
    \begin{equation*}
        \partial\varphi = \{w\in\R^d: \varphi(v)\ge\varphi(u) + \left \langle w , v-u\right \rangle, \quad \forall v\in\R^d\}.
    \end{equation*}
\end{definition}

\begin{lemma}
    Let $\varphi_1,\varphi_2 : \R^{d'}\rightarrow\R\cup\{\infty\}$ be proper, convex and lower semicontinuous functions and $W\in\R^{d'\times d}$.
    \begin{itemize}
        \item If $\varphi(v) = \varphi_1(v)+\varphi_2(v)$ and there exists $v_0\in\R^{d'}$ such that $v_0\in \operatorname{relint}(\operatorname{dom}(\varphi_1))\cap\operatorname{relint}(\operatorname{dom}(\varphi_2))$, then
              \begin{equation*}
                  \partial(\varphi_1+\varphi_2)(v) = \partial\varphi_1(v)+\partial\varphi_2(v), \quad \forall v\in\operatorname{dom}(\varphi).
              \end{equation*}
        \item If $\varphi = \varphi_1(Wu)$ and there exist $u_0\in\R^d$ such that $Au_0\in\operatorname{relint}(\operatorname{dom}(\varphi_1))$, then
              \begin{equation*}
                  \partial\varphi(u) = W^T\partial\varphi_1(Au) =\{W^Tv : v\in\partial\varphi_1(Au)\}, \quad \forall u\in\operatorname{dom}(\varphi).
              \end{equation*}
    \end{itemize}
\end{lemma}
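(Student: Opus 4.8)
The plan is to recognize both identities as classical facts of convex analysis---the Moreau--Rockafellar sum rule and the chain rule for precomposition with a linear map---and to prove each by splitting it into a routine inclusion, valid without any qualification, and a reverse inclusion that relies essentially on the stated relative-interior condition. (In the second item I read the operator as $W$ throughout, so that $\varphi=\varphi_1\circ W$, the qualification reads $Wu_0\in\relint(\dom(\varphi_1))$, and the claim is $\partial\varphi(u)=W^T\partial\varphi_1(Wu)$.)

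For the sum rule, the inclusion $\partial\varphi_1(v)+\partial\varphi_2(v)\subseteq\partial(\varphi_1+\varphi_2)(v)$ follows at once from the definition: given $w_i\in\partial\varphi_i(v)$, I would add the two subgradient inequalities $\varphi_i(v')\ge\varphi_i(v)+\langle w_i,v'-v\rangle$ to obtain the defining inequality for $w_1+w_2$. The reverse inclusion is the substantive step. Fixing $w\in\partial(\varphi_1+\varphi_2)(v)$, I would encode the subgradient relation as the Fenchel--Young equality $(\varphi_1+\varphi_2)(v)+(\varphi_1+\varphi_2)^*(w)=\langle w,v\rangle$, and then use the fact that, under the relative-interior assumption, the conjugate of the sum is the \emph{exact} infimal convolution of the conjugates, i.e. $(\varphi_1+\varphi_2)^*(w)=\inf_{w_1+w_2=w}\{\varphi_1^*(w_1)+\varphi_2^*(w_2)\}$ with the infimum attained at some pair $(w_1,w_2)$. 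Substituting this attaining pair into the Fenchel--Young equality and using the Fenchel--Young inequalities $\varphi_i(v)+\varphi_i^*(w_i)\ge\langle w_i,v\rangle$ forces equality in each summand, which is exactly $w_i\in\partial\varphi_i(v)$.

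The chain rule follows the same template. If $z\in\partial\varphi_1(Wu)$, then for every $u'$ one has $\varphi_1(Wu')\ge\varphi_1(Wu)+\langle z,W(u'-u)\rangle=\varphi_1(Wu)+\langle W^Tz,u'-u\rangle$, so $W^Tz\in\partial\varphi(u)$ and thus $W^T\partial\varphi_1(Wu)\subseteq\partial\varphi(u)$. For the reverse inclusion I would again pass to conjugates: under the qualification $Wu_0\in\relint(\dom(\varphi_1))$ the identity $(\varphi_1\circ W)^*(w)=\inf\{\varphi_1^*(z):W^Tz=w\}$ holds with attainment, so writing $w\in\partial\varphi(u)$ as a Fenchel--Young equality and inserting an attaining $z$ with $W^Tz=w$ yields $\varphi_1(Wu)+\varphi_1^*(z)=\langle z,Wu\rangle$, that is $z\in\partial\varphi_1(Wu)$, hence $w=W^Tz\in W^T\partial\varphi_1(Wu)$.

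I expect the main obstacle to lie entirely in justifying the two duality identities---the exactness (attainment) of the infimal convolution and of the conjugate of the composition---since these are precisely where the relative-interior conditions enter and cannot be dropped. Establishing attainment from scratch amounts to a proper-separation argument in $\R^{d'}\times\R$, separating the strict epigraph of one function from the hypograph-type set built from the other, with the qualification guaranteeing that the separating hyperplane is non-vertical and therefore yields a genuine subgradient; in practice I would instead cite the standard statements (e.g.\ Rockafellar) and verify only that the hypotheses match.
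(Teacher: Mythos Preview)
Your proposal is correct and follows the standard route to these results (the Moreau--Rockafellar sum rule and the chain rule for precomposition with a linear map), including the appropriate caveat that the substantive work lies in the attainment statements for the conjugate formulas. However, there is nothing to compare against: the paper states this lemma without proof, treating it as a classical fact from convex analysis (it is Rockafellar, \emph{Convex Analysis}, Theorems~23.8 and~23.9). So your write-up would in fact supply more than the paper does; if the goal is to match the paper, a bare citation suffices.
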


\begin{definition}\cite[p. 104]{Rockafellar70}
    Given a proper, convex, lower semicontinuous function $\varphi:\mathbb{R}^d\rightarrow \mathbb{R}\cup\{\infty\}$, the convex conjugate of $\varphi$ is the function
    \begin{equation*}
        \varphi^*:\R^d\rightarrow \R\cup\{\infty\}, \quad \varphi^*(w)=\underset{u\in\R^d}{\operatorname{\sup}}\ \langle w,u\rangle - \varphi(u), \ \forall \ w\in\R^d.
    \end{equation*}
\end{definition}
The following well-known result holds for the biconjugate function $(\varphi^*)^*$.
\begin{lemma}\cite[Theorem 12.2]{Rockafellar70}\label{lem:biconjugate}
    Let $\varphi:\R^d\rightarrow \R\cup\{\infty\}$ be proper, convex, and lower semicontinuous. Then $\varphi^*$ is convex and lower semicontinuous and $(\varphi^*)^*=\varphi$, namely,
    \begin{equation*}
        \varphi(u)=\underset{w\in\R^d}{\operatorname{\sup}}\ \langle u,w\rangle - \varphi^*(w), \quad \forall \ u\in\R^d.
    \end{equation*}
\end{lemma}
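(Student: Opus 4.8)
The plan is to establish the three claims in increasing order of difficulty: convexity and lower semicontinuity of $\varphi^*$ come almost for free, while the biconjugate identity is the real content. First I would note that, writing
\begin{equation*}
\varphi^*(w) = \sup_{u\in\R^d}\bigl(\langle w,u\rangle - \varphi(u)\bigr),
\end{equation*}
the conjugate $\varphi^*$ is the pointwise supremum over $u$ of the affine maps $w\mapsto\langle w,u\rangle-\varphi(u)$. Each is convex and continuous, and a pointwise supremum of convex lower semicontinuous functions is again convex and lower semicontinuous; hence so is $\varphi^*$. Properness follows from the hypotheses on $\varphi$: since $\varphi$ is proper there is $u_0\in\dom(\varphi)$, so $\varphi^*(w)\geq\langle w,u_0\rangle-\varphi(u_0)>-\infty$ for every $w$, while a proper convex lower semicontinuous $\varphi$ admits an affine minorant $\varphi(u)\geq\langle a_0,u\rangle-\beta_0$, which gives $\varphi^*(a_0)\leq\beta_0<\infty$.

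Next I would settle the easy half of the identity. Directly from the definition of $\varphi^*$ one has the Fenchel--Young inequality $\varphi^*(w)\geq\langle w,u\rangle-\varphi(u)$ for all $u,w$, i.e. $\varphi(u)\geq\langle u,w\rangle-\varphi^*(w)$; taking the supremum over $w$ yields $\varphi(u)\geq(\varphi^*)^*(u)$ for every $u$.

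The substantial part is the reverse inequality $(\varphi^*)^*(u)\geq\varphi(u)$, and the key tool is the separation theorem. Arguing by contradiction, suppose $(\varphi^*)^*(u_0)<\varphi(u_0)$ and pick $r$ with $(\varphi^*)^*(u_0)<r<\varphi(u_0)$, so that $(u_0,r)\notin\operatorname{epi}(\varphi)$. Since $\varphi$ is convex and lower semicontinuous, $\operatorname{epi}(\varphi)$ is closed and convex in $\R^d\times\R$, so $(u_0,r)$ can be strictly separated from it: there are $(a,b)\in\R^d\times\R$ and $\alpha\in\R$ with $\langle a,u\rangle+bt\geq\alpha>\langle a,u_0\rangle+br$ for all $(u,t)\in\operatorname{epi}(\varphi)$. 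Letting $t\to+\infty$ forces $b\geq0$. In the non-vertical case $b>0$ I would normalize $b=1$, substitute $t=\varphi(u)$ to read off $\varphi^*(-a)\leq-\alpha$, and then estimate $(\varphi^*)^*(u_0)\geq\langle u_0,-a\rangle-\varphi^*(-a)\geq\alpha-\langle a,u_0\rangle>r$, contradicting the choice of $r$.

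I expect the genuine obstacle to be the degenerate \emph{vertical} case $b=0$, which occurs precisely when $u_0\notin\overline{\dom(\varphi)}$ and in which the separating hyperplane carries no information about function values---separation only gives $\langle a,u\rangle\geq\alpha>\langle a,u_0\rangle$ for all $u\in\dom(\varphi)$. To handle it I would combine this vertical functional with the affine minorant $a_0$ above: testing the definition of $(\varphi^*)^*(u_0)$ against $a_0-\lambda a$ for $\lambda>0$ gives $\varphi^*(a_0-\lambda a)\leq\beta_0-\lambda\alpha$, whence
\begin{equation*}
(\varphi^*)^*(u_0)\geq\langle u_0,a_0\rangle-\beta_0+\lambda\bigl(\alpha-\langle a,u_0\rangle\bigr),
\end{equation*}
and since $\alpha-\langle a,u_0\rangle>0$ the right-hand side tends to $+\infty$ as $\lambda\to+\infty$; this is consistent with $\varphi(u_0)=+\infty$ and closes the argument. (Since the statement is exactly \cite[Theorem 12.2]{Rockafellar70}, one may alternatively just invoke it.)
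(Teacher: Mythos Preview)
Your proof is correct and is essentially the standard Fenchel--Moreau argument via separation of a point from the closed convex epigraph, handling the vertical-hyperplane case by tilting with an affine minorant. The paper itself does not prove this lemma at all: it simply states the result and cites \cite[Theorem~12.2]{Rockafellar70}, exactly as you note in your final parenthetical. So your write-up goes well beyond what the paper provides; for the purposes of this paper the one-line citation would suffice. (One small expository quibble: the vertical case $b=0$ \emph{implies} $u_0\notin\overline{\dom(\varphi)}$, but the converse need not hold for the particular hyperplane returned by the separation theorem, so ``precisely'' is slightly too strong---this does not affect the validity of the argument.)
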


\begin{definition}\label{def:proxP}\cite[p. 278]{Moreau65}, \cite[p. 877]{Frankel-etal-2015}
    The proximal operator of a proper, convex, lower semicontinuous function $\varphi$ with parameter $\gamma>0$ is the map $\mathrm{prox}_{\gamma \varphi}:\mathbb{R}^d\rightarrow\mathbb{R}^d$ defined as
    $$
        \mathrm{prox}_{\gamma \varphi}(a) = \arg\min_{u\in\mathbb{R}^d}\varphi(u)+\frac{1}{2\gamma}\|u-a\|^2, \quad \forall \ a\in\mathbb{R}^d.
    $$
    Similarly, we define the proximal operator with respect to the norm induced by $P\in\mathcal{D}_{\eta}$ as
    \begin{equation*}
        \prox_{\gamma \varphi}^P(a) = \argmin_{u\in\R^d}\varphi(u) +\frac{1}{2\gamma}\|u-a\|_P^2, \quad \forall a\in\R^d.
    \end{equation*}
\end{definition}

\begin{lemma}
    Let $\varphi:\R^d\rightarrow\R\cup\{\infty\}$ be proper, convex, and lower semicontinuous. For all $\alpha,\beta>0$, the following statements are equivalent:
    \begin{enumerate}
        \item u = $\prox_{\alpha\varphi}(u+\alpha w)$;
        \item $w\in\partial\varphi(u)$;
        \item $\varphi(u)+\varphi^*(w) = \left \langle w, u\right \rangle$;
        \item $u\in\partial\varphi^*(w)$;
        \item $w = \prox_{\beta\varphi^*}(\beta u+w)$.
    \end{enumerate}
\end{lemma}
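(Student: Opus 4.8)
The plan is to prove the five conditions equivalent by building a chain $(1)\Leftrightarrow(2)\Leftrightarrow(3)\Leftrightarrow(4)\Leftrightarrow(5)$, using the Fenchel--Young (in)equality as the central hub and exploiting the symmetry between $\varphi$ and its conjugate $\varphi^*$. The whole argument rests on two ingredients already available: the optimality condition for the proximal subproblem, and the biconjugate identity of Lemma \ref{lem:biconjugate}.

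First I would prove $(1)\Leftrightarrow(2)$ via the first-order optimality condition for the minimization in Definition \ref{def:proxP}. The identity $u=\prox_{\alpha\varphi}(u+\alpha w)$ means that $u$ minimizes $v\mapsto\varphi(v)+\frac{1}{2\alpha}\|v-(u+\alpha w)\|^2$. Since the quadratic term is finite and differentiable on all of $\R^d$, its domain is the whole space and the subdifferential sum rule applies with no constraint qualification needed; the stationarity condition $0\in\partial\varphi(u)+\frac{1}{\alpha}\bigl(u-(u+\alpha w)\bigr)=\partial\varphi(u)-w$ is then precisely $w\in\partial\varphi(u)$. Next, $(2)\Leftrightarrow(3)$ is the Fenchel--Young equality: the definition of $\varphi^*$ gives $\varphi(u)+\varphi^*(w)\ge\langle w,u\rangle$ for all $u,w$, and by the definition of the subdifferential the relation $w\in\partial\varphi(u)$ holds exactly when $u$ attains the supremum defining $\varphi^*(w)$, that is, exactly when this inequality is an equality.

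For $(3)\Leftrightarrow(4)$ I would invoke $(\varphi^*)^*=\varphi$ from Lemma \ref{lem:biconjugate}. Condition $(3)$ is symmetric under interchanging the pair $(\varphi,u)$ with $(\varphi^*,w)$, so applying the already established equivalence $(2)\Leftrightarrow(3)$ to the function $\varphi^*$ — swapping the roles of the primal and dual variables and using $(\varphi^*)^*=\varphi$ — yields $u\in\partial\varphi^*(w)\iff\varphi^*(w)+\varphi(u)=\langle u,w\rangle$, which is $(4)\Leftrightarrow(3)$. Finally, $(4)\Leftrightarrow(5)$ is nothing other than the equivalence $(1)\Leftrightarrow(2)$ reused with $\varphi^*$ in place of $\varphi$, with $\beta$ in place of $\alpha$, and with the primal and dual variables interchanged.

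I expect the only delicate step to be the appeal to the biconjugate theorem in $(3)\Leftrightarrow(4)$: this is where the standing hypotheses of properness, convexity, and lower semicontinuity are genuinely used, since without lower semicontinuity the identity $(\varphi^*)^*=\varphi$ may fail and the symmetry argument that transfers statements about $\varphi$ to statements about $\varphi^*$ breaks down. Everything else reduces to the optimality condition for a smooth-plus-convex minimization and to unwinding the definition of the conjugate, so those portions are essentially routine.
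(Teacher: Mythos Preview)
Your argument is correct and is exactly the standard route for this classical result. Note, however, that the paper states this lemma without proof --- it is listed among the preliminaries as a well-known fact from convex analysis --- so there is no proof in the paper to compare against. Your chain $(1)\Leftrightarrow(2)\Leftrightarrow(3)\Leftrightarrow(4)\Leftrightarrow(5)$, hinging on the optimality condition for the proximal subproblem, the Fenchel--Young equality, and the biconjugate identity of Lemma~\ref{lem:biconjugate}, is the textbook proof and would be perfectly acceptable here.
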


Using Lemma \ref{lem:biconjugate}, we can reformulate the initial problem \eqref{eq:problem_formulation} as the following convex-concave saddle-point problem
\begin{equation}\label{eq:primal_dual}
    \min_{u\in\R^d}\max_{v\in\R^{d'}} \ \mL(u,v)\equiv f(u)+\langle Wu,v\rangle -h^*(v),
\end{equation}
where $\mL(u,v)$ denotes the primal--dual function. A solution of \eqref{eq:primal_dual} is any point $(\hat{u},\hat{v})\in\R^d\times \R^{d'}$ such that
\begin{equation}\label{eq:saddle}
    \mL(\hat{u},v)\leq \mL(\hat{u},\hat{v})\leq \mL(u,\hat{v}), \quad \forall \ u\in\R^d, \ \forall \ v\in\R^{d'}.
\end{equation}

From now on, we consider the optimization problem \eqref{eq:problem_formulation} under the following assumptions.

\begin{assumption}
    \label{hyp: standard}
    \quad
    \begin{enumerate}
        \item $f: \mathbb{R}^d \to \mathbb{R}$ is convex and differentiable with $L$-Lipschitz continuous gradient;
        \item \label{hyp: standard_h} $h: \mathbb{R}^{d'} \to \overline{\mathbb{R}}$ is a proper convex lower semicontinuous function;
        \item \label{hyp: standard_W} $W\in \mathbb{R}^{d'\times d}$ and exists $u_0$ such that $Wu_0 \in \relint(\dom(h))$;
        \item Problem in equation \eqref{eq:problem_formulation} has at least one solution (the set in equation \eqref{eq:problem_formulation} is not empty).
    \end{enumerate}
\end{assumption}

We remark that the assumption on $W$ is needed to guarantee that the subdifferential rule $\partial (h \circ W)(u)=W^T\partial h(Wu)$ holds, thus we can interpret the minimum points of \eqref{eq:problem_formulation} as solutions of appropriate variational equations, as stated below.

\begin{lemma}\cite[Lemma 3.1]{Chen-et-al-2018}\label{lem:tech0}
    Under Assumption \ref{hyp: standard}, a point $\hat{u}\in\R^d$ is a solution of problem \eqref{eq:problem_formulation} if and only if the following conditions hold
    \begin{equation}\label{eq:minimizer_key}
        \begin{cases}
            \nabla f(\hat{u})+W^T\hat{v}=0, \\
            \hat{v}=\prox_{\beta\alpha^{-1}h^*}(\hat{v}+\beta\alpha^{-1}W\hat{u}),
        \end{cases}\quad \forall \ \alpha,\beta>0.
    \end{equation}
\end{lemma}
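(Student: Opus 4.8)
The plan is to start from Fermat's rule, which says that $\hat{u}$ solves \eqref{eq:problem_formulation} if and only if $0 \in \partial(f + h\circ W)(\hat{u})$, and then to peel this inclusion apart using the subdifferential calculus rules and the prox/subdifferential equivalences collected above. The whole argument is essentially a chain of the preceding lemmas, so I expect no genuinely hard step; the only delicate point is verifying that the qualification conditions for the sum rule and the chain rule are satisfied, which is exactly what Assumption \ref{hyp: standard} supplies.

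First I would check the two qualification hypotheses. Since $\dom(f) = \R^d$, we have $\relint(\dom(f)) = \R^d$; and because $Wu_0\in\dom(h)$ by Assumption \ref{hyp: standard}.\ref{hyp: standard_W}, the set $\dom(h\circ W)$ is a nonempty convex set and hence has nonempty relative interior, so $\relint(\dom(f))\cap\relint(\dom(h\circ W))\neq\emptyset$. This lets me apply the additivity rule for subdifferentials, which together with the differentiability of $f$ (so that $\partial f(\hat{u}) = \{\nabla f(\hat{u})\}$) gives
\[
    \partial(f+h\circ W)(\hat{u}) = \{\nabla f(\hat{u})\} + \partial(h\circ W)(\hat{u}).
\]
Next, invoking the relint condition $Wu_0\in\relint(\dom(h))$ from Assumption \ref{hyp: standard}.\ref{hyp: standard_W}, the chain rule from the second item of the earlier lemma (stated with $W$ in place of the operator there) yields $\partial(h\circ W)(\hat{u}) = W^T\partial h(W\hat{u})$. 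Combining the two, the optimality condition $0\in\partial(f+h\circ W)(\hat{u})$ is equivalent to the existence of $\hat{v}\in\partial h(W\hat{u})$ with $\nabla f(\hat{u}) + W^T\hat{v} = 0$, which is precisely the first line of \eqref{eq:minimizer_key}.

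It then remains to recast the inclusion $\hat{v}\in\partial h(W\hat{u})$ as the fixed-point equation in the second line. For this I would apply the equivalence lemma with $\varphi = h$, $u = W\hat{u}$ and $w = \hat{v}$: statement (2), namely $\hat{v}\in\partial h(W\hat{u})$, is equivalent to statement (5), namely $\hat{v} = \prox_{\beta' h^*}(\beta' W\hat{u} + \hat{v})$, for each $\beta'>0$. Setting $\beta' = \beta\alpha^{-1}$, which ranges over all of $(0,\infty)$ as $\alpha,\beta$ do, reproduces the second line of \eqref{eq:minimizer_key} and accounts for the quantifier ``$\forall\,\alpha,\beta>0$''. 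The converse direction is then immediate: if $\hat{v}$ satisfies both equations, the fixed-point equation for any single admissible pair $(\alpha,\beta)$ gives $\hat{v}\in\partial h(W\hat{u})$ through the same equivalence, and substituting into the first equation shows
\[
    0 = \nabla f(\hat{u}) + W^T\hat{v} \in \{\nabla f(\hat{u})\} + W^T\partial h(W\hat{u}) = \partial(f+h\circ W)(\hat{u}),
\]
so $\hat{u}$ minimizes \eqref{eq:problem_formulation}.

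The only place demanding care is the verification of the two qualification hypotheses, both of which are guaranteed by Assumption \ref{hyp: standard}; once they are in place, the sum rule, the chain rule, and the prox/subdifferential equivalence combine mechanically, and the reparametrization $\beta'=\beta\alpha^{-1}$ absorbs the purely cosmetic difference in the proximal parameter.
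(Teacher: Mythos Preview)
Your proof is correct. The paper itself does not give a proof of this lemma but simply cites \cite[Lemma~3.1]{Chen-et-al-2018}; your argument---Fermat's rule, the subdifferential sum and chain rules (whose qualification conditions are precisely what Assumption~\ref{hyp: standard} guarantees), and the equivalence $(2)\Leftrightarrow(5)$ between the subdifferential inclusion and the proximal fixed-point equation---is the standard route and is essentially the proof one finds in the cited reference.
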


\section{Inexact proximal gradient methods}\label{sec:primal--dual}
In this section, we will revisit a class of methods used to solve the initial problem \eqref{eq:problem_formulation} when the proximal operator of \( h \circ W \) does not have a closed-form solution or is computationally expensive. To address this issue, we will report results that provide an approach to approximate the proximal operator using a primal-dual scheme. Additionally, we will provide convergence results for each method towards the solution of our initial problem \eqref{eq:problem_formulation}.

\subsection{Nested Primal-Dual Method (NPD)}
This method can be seen as an inexact inertial forward-backward algorithm, and its iterative scheme can be summarized as follows:
\begin{align}
    \label{eq:NPD}
    \begin{cases}
        \bar{u}_n = u_n + \gamma_n (u_n - u_{n-1}), \\
        u_{n+1} \approx \prox_{\alpha_n h \circ W}(\bar{u}_n - \alpha_n \nabla f(\bar{u}_n)),
    \end{cases}
\end{align}
where \( \approx \) indicates an approximation of the proximal-gradient point. Here, \( \bar{u}_n \) is usually referred to as the inertial point, \( \gamma_n \in [0,1] \) is the inertial parameter, and $ \alpha_n>0$ is the step length along the descent direction \( -\nabla f(\bar{u}_n) \). Since in the iterative scheme \eqref{eq:NPD} we only need an approximation of the proximity operator of the non-smooth term in \eqref{eq:problem_formulation}, we state the following result.

\begin{lemma}\label{lemma:primal_dual_NPD}
    Suppose that \( h: \mathbb{R}^{d'} \rightarrow \mathbb{R} \cup \{\infty\} \) and \( W \in \mathbb{R}^{d' \times d} \) satisfy Assumption \ref{hyp: standard} (ii)-(iii). Given \( a \in \mathbb{R}^d \), \( \alpha > 0 \), \( 0 < \beta < 2/\|W\|^2 \), and \( v^0 \in \mathbb{R}^{d'} \), define the sequence
    \begin{equation}\label{eq:dual_seq_NPD}
        v^{k+1} = \prox_{\beta \alpha^{-1}h^*}(v^k+\beta\alpha^{-1}W(a-\alpha W^Tv^k)), \quad \forall k \geq 0.
    \end{equation}
    Then there exists \(\hat{v}\in\mathbb{R}^{d'}\) such that:
    \begin{itemize}
        \item[(i)] \(\lim_{k\rightarrow\infty}v^{k} = \hat{v}\);
        \item[(ii)] \(\prox_{\alpha h\circ W}(a) = a - \alpha W^T\hat{v}.\)
    \end{itemize}
\end{lemma}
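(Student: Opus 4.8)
The plan is to recognize the dual recursion \eqref{eq:dual_seq_NPD} as a proximal-gradient (forward--backward) iteration applied to the Fenchel dual of the strongly convex problem that defines the proximal point, and then to invoke the standard convergence theory for such iterations. First I would record that, by Definition \ref{def:proxP}, the point $u^\star := \prox_{\alpha h\circ W}(a)$ is the \emph{unique} minimizer of the strongly convex map $u\mapsto h(Wu)+\tfrac{1}{2\alpha}\|u-a\|^2$, so that proving (ii) reduces to showing that the limit $\hat v$ of $\{v^k\}$ satisfies $u^\star = a-\alpha W^T\hat v$.

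To identify the dual problem, I would use the biconjugation identity of Lemma \ref{lem:biconjugate} to write $h(Wu)=\max_{v}\langle Wu,v\rangle - h^*(v)$, turning the prox problem into the saddle problem $\min_u\max_v \langle Wu,v\rangle - h^*(v)+\tfrac{1}{2\alpha}\|u-a\|^2$. Carrying out the inner minimization in $u$, whose optimum is $u=a-\alpha W^Tv$, yields the dual objective $G(v):=g(v)+h^*(v)$ with smooth part $g(v)=\tfrac{\alpha}{2}\|W^Tv\|^2-\langle Wa,v\rangle$. A direct computation gives $\nabla g(v)=-W(a-\alpha W^Tv)$, whose gradient is Lipschitz with constant $L_g=\alpha\|W\|^2$. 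Substituting this into the forward--backward step $v^{k+1}=\prox_{s\,h^*}(v^k-s\nabla g(v^k))$ with step size $s=\beta\alpha^{-1}$ reproduces \eqref{eq:dual_seq_NPD} exactly, and the hypothesis $0<\beta<2/\|W\|^2$ is precisely the admissibility condition $s<2/L_g$ for convergence.

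The step I expect to be the most delicate is guaranteeing that $G$ attains its minimum, since this is what the forward--backward convergence result presupposes. Here I would argue from the primal side: because $u^\star$ minimizes the strongly convex prox objective, the sum and chain rules for subdifferentials (both available under Assumption \ref{hyp: standard}(ii)--(iii)) give $0\in W^T\partial h(Wu^\star)+\alpha^{-1}(u^\star-a)$, so there exists $\hat v\in\partial h(Wu^\star)$ with $u^\star=a-\alpha W^T\hat v$. Using the equivalence between $\hat v\in\partial h(Wu^\star)$ and $Wu^\star\in\partial h^*(\hat v)$ from the proximal-characterization lemma, one checks that this $\hat v$ satisfies $0\in\nabla g(\hat v)+\partial h^*(\hat v)=\partial G(\hat v)$, so the minimizer set of $G$ is nonempty. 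The standard forward--backward convergence result in finite dimension (e.g.\ \cite{Combettes-Wajs-2005}) then yields $v^k\to\hat v$ for some minimizer of $G$, which is assertion (i).

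Finally, for (ii) I would note that although $G$ may admit several minimizers when $WW^T$ is singular, the saddle-point optimality conditions force $W^T\hat v=\alpha^{-1}(a-u^\star)$ for \emph{every} dual optimum $\hat v$; consequently $a-\alpha W^T\hat v=u^\star=\prox_{\alpha h\circ W}(a)$ irrespective of which minimizer the iteration selects. This removes any ambiguity arising from non-uniqueness of $\hat v$ and completes the argument.
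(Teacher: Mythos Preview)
Your argument is correct: recognizing \eqref{eq:dual_seq_NPD} as a forward--backward iteration on the Fenchel dual $G(v)=\tfrac{\alpha}{2}\|W^Tv\|^2-\langle Wa,v\rangle+h^*(v)$, checking the step-size condition $s=\beta\alpha^{-1}<2/L_g$, securing a dual minimizer via the primal optimality conditions, and then noting that all dual optima yield the same primal recovery $a-\alpha W^T\hat v$ is exactly the standard route. The paper itself does not give a proof but simply refers to \cite[Theorem~1]{Bonettini-et-al-2023a}; your self-contained derivation is the argument underlying that reference, so the approaches coincide.
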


\begin{proof}
    See \cite{Bonettini-et-al-2023a}, Theorem 1.
\end{proof}

Note that by combining items $(i)$ and $(ii)$ of Lemma \ref{lemma:primal_dual_NPD} with equation \eqref{eq:dual_seq_NPD}, it holds
\begin{equation*}
    \prox_{\alpha h\circ W}(a) = \lim_{k\rightarrow\infty}a-\alpha W^T\prox_{\beta \alpha^{-1}h^*}(v^k+\beta\alpha^{-1}W(a-\alpha W^Tv^k)).
\end{equation*}
In this way, we can compute an approximation of the proximal operator of \( h\circ W \) by a finite number of steps of a primal-dual procedure, provided that the operator \(\prox_{\beta\alpha^{-1} h^*}\) and the matrix-vector product with the linear operators \( W \) and \( W^T \) are easily computable. Let \( k_{\mathrm{max}} \) be the maximum number of inner iterations used to compute the dual sequence \eqref{eq:dual_seq_NPD}, and recalling that \( L \) is the Lipschitz constant of the gradient of \( f \), the resulting NPD algorithm is summarized in Algorithm \ref{alg:npd}.

\begin{algorithm}
    \caption{NPD}
    \label{alg:npd}
    \begin{algorithmic}[1]
        \State Choose \( u_0 \in \mathbb{R}^d \), \( 0<\alpha<\frac{1}{L} \), \( 0<\beta<\frac{1}{\|W\|^2} \), \( k_{\mathrm{max}} \in \mathbb{N} \), \(\{\gamma_n\}_{n\in \mathbb{N}} \subseteq \mathbb{R}_{\ge 0}\).
        \State \( u_{-1} = u_0 \)
        \State \( u_{0}^0 = 0 \)
        \For{\( n = 0, 1, \dots \)}
        \State \(\bar{u}_n = u_n + \gamma_n (u_n - u_{n-1})\)
        \State \( u_{n+\frac{1}{2}} = \bar{u}_n - \alpha \nabla f(\bar{u}_n) \)
        \For{\( k = 0, 1, \dots, k_{\text{max}}-1 \)}
        \State \( u_{n}^{k} = u_{n+\frac{1}{2}} - \alpha W^T v_{n}^k \)
        \State \( v_{n}^{k+1} = \prox_{\beta \alpha^{-1} h^*}(v_{n}^k + \beta \alpha^{-1} W u_{n}^{k}) \)
        \EndFor
        \State \( v_{n+1}^0 = v_{n}^{k_{\text{max}}} \)
        \State \( u_{n}^{k_{\text{max}}} = u_{n+\frac{1}{2}} - \alpha W^T v_{n}^{k_{\text{max}}} \)
        \State \( u_{n+1} = \frac{1}{k_{\text{max}}} \sum_{j=1}^{k_{\text{max}}} u_{n}^{j} \)
        \EndFor
    \end{algorithmic}
\end{algorithm}

In Algorithm \ref{alg:npd}, a ``warm-up strategy'' is also considered, meaning that each inner primal-dual loop is ``warm started'' with the outcome of the previous one. In \cite{Bonettini-et-al-2023a}, this has been proven to be sufficient to show the convergence of the iterates to a solution of \eqref{eq:problem_formulation} when the accuracy in the proximal evaluation is preset.

To complete this brief description of the NPD method, we recall the convergence result.

\begin{theorem}[Convergence of NPD]
    \label{thm:convergence_npd}
    Suppose that $f, h$, and $W$ satisfy Assumption \ref{hyp: standard}. Let \( \{(u_n, v_n^0)\}_{n\in \mathbb{N}} \) be the primal-dual sequence generated by the NPD Algorithm \ref{alg:npd} with \( \alpha_n = \alpha \in (0, \frac{1}{L}] \) and \( \beta_n = \beta \in (0, \|W\|^{-2}) \) for all \( n \in \mathbb{N} \). Suppose also that the inertial parameter \( \{\gamma_n\}_{n\in\mathbb{N}} \) satisfies
    \begin{equation}\label{eq:conv_NPD_gamma}
        \sum_{n=0}^\infty\gamma_n\|u_n-u_{n-1}\|<\infty.
    \end{equation}
    Then, the following statements hold:
    \begin{itemize}
        \item[(i)] the sequence \( \{(u_n,v_n^0)\}_{n\in\mathbb{N}} \) is bounded;
        \item[(ii)] the sequence \( (\{u_n, v_n^0)\}_{n\in \mathbb{N}} \) converges to a solution of \eqref{eq:primal_dual} and therefore the primal sequence \( \{u_n\}_{n\in \mathbb{N}} \) converges to a solution of the initial problem \eqref{eq:problem_formulation}.
    \end{itemize}
\end{theorem}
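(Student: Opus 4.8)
The plan is to treat the NPD scheme \eqref{eq:NPD} as an inexact inertial forward--backward iteration and to establish convergence through a quasi-Fej\'er argument. The natural starting point is the characterization of solutions furnished by Lemma~\ref{lem:tech0}: a pair $(\hat u,\hat v)$ solves the saddle--point problem \eqref{eq:primal_dual} precisely when it satisfies the fixed--point relations \eqref{eq:minimizer_key} underlying the forward--backward map of \eqref{eq:NPD}. The target is therefore to show that the generated primal--dual sequence $\{(u_n,v_n^0)\}_{n\in\mathbb{N}}$ converges to such a fixed point.

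First I would make the inexactness precise. Writing $\tilde u_{n+1}=\prox_{\alpha h\circ W}(\bar u_n-\alpha\nabla f(\bar u_n))$ for the exact proximal--gradient point, the actual iterate $u_{n+1}$ produced by the finite inner loop differs from $\tilde u_{n+1}$ by an error that Lemma~\ref{lemma:primal_dual_NPD} lets me control: items (i)--(ii) identify the exact prox as $\bar u_n-\alpha\nabla f(\bar u_n)-\alpha W^T\hat v_n$ with $\hat v_n=\lim_k v_n^k$, so the primal error is governed by $\|v_n^{k_{\max}}-\hat v_n\|$. I would encode this as an $\epsilon_n$--approximate proximal point, i.e.\ the computed $u_{n+1}$ satisfies the optimality condition of the prox minimization up to a residual $\epsilon_n$, and then verify that the warm start $v_{n+1}^0=v_n^{k_{\max}}$, together with the nonexpansiveness of the dual update in \eqref{eq:dual_seq_NPD}, keeps $\{\epsilon_n\}$ summable under the preset accuracy.

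The central step is a quasi-Fej\'er inequality. Using firm nonexpansiveness of the proximal operator, convexity of $f$, and the descent estimate afforded by $\alpha\le 1/L$ on the $L$--Lipschitz gradient, I would derive, for any solution $\hat u$, an inequality of the (schematic) form
\begin{equation*}
    \|u_{n+1}-\hat u\|^2\le \|\bar u_n-\hat u\|^2-(1-\alpha L)\|u_{n+1}-\bar u_n\|^2+2\alpha\epsilon_n,
\end{equation*}
and then expand $\bar u_n=u_n+\gamma_n(u_n-u_{n-1})$ to pass to an inequality in $\|u_n-\hat u\|^2$ whose perturbations are bounded by multiples of $\gamma_n\|u_n-u_{n-1}\|$ and $\epsilon_n$. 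The hypothesis \eqref{eq:conv_NPD_gamma} and the summability of $\{\epsilon_n\}$ make these right--hand perturbations summable, which is exactly the quasi-Fej\'er condition.

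Finally I would invoke the quasi-Fej\'er/Opial machinery: summability yields that $\lim_n\|u_n-\hat u\|$ exists for every solution $\hat u$, giving boundedness, hence item (i); combined with the fact that every cluster point satisfies the fixed--point relations \eqref{eq:minimizer_key} (by passing to the limit along the iteration and using lower semicontinuity of $h$ and continuity of $\nabla f$), Opial's lemma forces convergence of the whole primal--dual sequence to a single solution of \eqref{eq:primal_dual}, whose primal component solves \eqref{eq:problem_formulation}, which is item (ii). I expect the main obstacle to be the error analysis of the inner loop: Lemma~\ref{lemma:primal_dual_NPD} only guarantees \emph{asymptotic} convergence of the dual iterates, so turning it into a \emph{summable} primal-side error $\{\epsilon_n\}$ compatible with the quasi-Fej\'er inequality --- exploiting the warm start and the averaging step $u_{n+1}=\tfrac1{k_{\max}}\sum_{j}u_n^{j}$ --- is the delicate part of the argument.
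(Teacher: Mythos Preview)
The paper does not give a proof of this theorem at all: it simply cites \cite{Bonettini-et-al-2023a}, Theorem~2. So any comparison of ``approaches'' is really a comparison between your sketch and the argument in that external reference, as reflected in the structure of the closely related Theorem~\ref{thm:convergenceNPDIT}.

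Your outline has a genuine gap in the error analysis. You propose to treat $u_{n+1}$ as an $\epsilon_n$--approximate prox of the exact forward--backward point and then feed a summable $\{\epsilon_n\}$ into a standard quasi-Fej\'er inequality for the primal variable alone. But in Algorithm~\ref{alg:npd} the number of inner iterations $k_{\max}$ is \emph{fixed}, so Lemma~\ref{lemma:primal_dual_NPD} gives you no a priori rate on $\|v_n^{k_{\max}}-\hat v_n\|$, and there is no mechanism in your argument that forces $\sum_n\epsilon_n<\infty$. The warm start by itself does not deliver this: it ties consecutive dual blocks together, but the target $\hat v_n$ moves with $\bar u_n$, so nonexpansiveness of a single dual step does not translate into a contraction of the accumulated primal error.

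The route that actually works --- and which you can read off from item~(ii) of Theorem~\ref{thm:convergenceNPDIT} specialized to $P_n=I$ --- is to abandon the ``primal iteration plus summable perturbation'' viewpoint and instead run the Fej\'er/Opial argument on a \emph{coupled} primal--dual Lyapunov quantity of the form $\beta k_{\max}\|u_n-\hat u\|^2+\alpha^2\|v_n^0-\hat v\|^2_{I-\beta WW^T}$. The warm start and the averaging $u_{n+1}=\tfrac{1}{k_{\max}}\sum_j u_n^j$ are precisely what make this combined quantity quasi-monotone along the iterations, without ever needing the inner-loop residual to be summable on its own. Once that joint quasi-Fej\'er inequality is in place, boundedness and Opial's lemma go through as you describe.
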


\begin{proof}
    See \cite{Bonettini-et-al-2023a}, Theorem 2.
\end{proof}

\subsection{A variable metric nested primal--dual framework}

We introduce a more sophisticated variant of the iteration \eqref{eq:NPD}, proposed in \cite{NPDIT}, which is a variable metric strategy combined with inexact proximal evaluations to achieve faster convergence to the minimum point of the problem \eqref{eq:problem_formulation}.

In general, given $\eta>0$ and \( P_n \in \mathcal{D}_{\eta} \), an inertial variable metric forward-backward method for problem \eqref{eq:problem_formulation} can be expressed as
\begin{equation}\label{eq:FB}
    \begin{cases}
        \bar{u}_n = u_n + \gamma_n(u_n-u_{n-1}), \\
        u_{n+1} \approx \mathrm{prox}^{P_n}_{\alpha_n h \circ W}(\bar{u}_n - \alpha_n P_{n}^{-1}\nabla f(\bar{u}_n)),
    \end{cases}
\end{equation}
where \(\mathrm{prox}^{P_n}_{\alpha_n h \circ W}\) is defined following Definition \ref{def:proxP}. The proximal operator with a metric defined by \( P_n \in \mathcal{D}_{\eta} \) can be approximated by an appropriate sequence of primal-dual iterates, similar to the case in NPD. The following result generalizes the one derived in Lemma \ref{lemma:primal_dual_NPD}, taking into account the presence of the scaling matrix  \( P_n\).

\begin{lemma}\label{lem:primal_dual}
    Suppose that \( h:\mathbb{R}^{d'}\rightarrow \mathbb{R}\cup\{\infty\} \) and \( W \in \mathbb{R}^{d'\times d} \) satisfy Assumptions~\ref{hyp: standard} (ii)-(iii). Given \( a \in \mathbb{R}^d \), \( \alpha > 0 \), \( 0 < \beta < \frac{2}{\|WP^{-1}W^T\|} \), and \( v^0 \in \mathbb{R}^{d'} \), define the sequence
    \begin{equation}\label{eq:primal--dual-procedure}
        v^{k+1} = \prox_{\beta \alpha^{-1}h^*}(v^k + \beta\alpha^{-1}W(a - \alpha P^{-1}W^Tv^k)), \quad \forall k \geq 0.
    \end{equation}
    Then there exists \( \hat{v} \in \mathbb{R}^{d'} \) such that:
    \begin{itemize}
        \item[(i)] \(\lim_{k\rightarrow\infty} v^{k} = \hat{v}\),
        \item[(ii)] \(\prox_{\alpha h \circ W}^P(a) = a - \alpha P^{-1}W^T\hat{v}\).
    \end{itemize}
\end{lemma}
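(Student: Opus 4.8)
The plan is to recognize the dual recursion \eqref{eq:primal--dual-procedure} as a classical proximal gradient (forward--backward) scheme applied to a suitable dual objective, and then to import the standard convergence theory for such schemes. First I would record the optimality condition defining the target point. By Definition \ref{def:proxP}, $u^\star := \prox^P_{\alpha h\circ W}(a)$ is the unique minimizer of the strongly convex map $u\mapsto h(Wu)+\frac{1}{2\alpha}\|u-a\|_P^2$, so $0\in W^T\partial h(Wu^\star)+\frac{1}{\alpha}P(u^\star-a)$, where the chain rule $\partial(h\circ W)=W^T\partial h(W\cdot)$ is licensed by Assumption \ref{hyp: standard}(iii). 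Equivalently, there is a vector $\hat v$ with $\hat v\in\partial h(Wu^\star)$ and $u^\star=a-\alpha P^{-1}W^T\hat v$; this already exhibits the form claimed in (ii), so the task reduces to showing that \eqref{eq:primal--dual-procedure} converges to such a $\hat v$.

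Next I would derive the dual objective. Writing $h(Wu)=\sup_v\langle Wu,v\rangle-h^*(v)$ via Lemma \ref{lem:biconjugate} and minimizing the inner problem over $u$ (for fixed $v$ the minimizer is $a-\alpha P^{-1}W^Tv$), the saddle problem collapses to the minimization of
\begin{equation*}
    \Phi(v) := h^*(v)+\tfrac{\alpha}{2}\langle v, WP^{-1}W^Tv\rangle-\langle Wa,v\rangle .
\end{equation*}
The smooth part $g(v):=\frac{\alpha}{2}\langle v,WP^{-1}W^Tv\rangle-\langle Wa,v\rangle$ is convex with $\nabla g(v)=\alpha WP^{-1}W^Tv-Wa$, whose Lipschitz constant is $L_g=\alpha\|WP^{-1}W^T\|$. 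A short algebraic manipulation then identifies \eqref{eq:primal--dual-procedure} with the forward--backward step $v^{k+1}=\prox_{sh^*}(v^k-s\nabla g(v^k))$ at step size $s=\beta\alpha^{-1}$, and the admissibility bound $s<2/L_g$ becomes exactly the hypothesis $\beta<2/\|WP^{-1}W^T\|$.

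It then remains to invoke convergence of the iterates and to recover the primal point. Since $u^\star$ exists and the chain rule furnishes the dual certificate $\hat v$, the set $\argmin\Phi$ is nonempty; hence, by the standard convergence result for the proximal gradient method with step size in $(0,2/L_g)$ (e.g. \cite{Combettes-Wajs-2005}), $v^k$ converges to some $\hat v\in\argmin\Phi$, which is (i). Passing to the limit in \eqref{eq:primal--dual-procedure} and using continuity of the proximal map shows $\hat v$ is a fixed point, $\hat v=\prox_{sh^*}\!\big(\hat v+sW(a-\alpha P^{-1}W^T\hat v)\big)$; by the equivalences between the proximal operator, the subdifferential, and the convex conjugate recalled in Section \ref{sec:preliminaries}, this is equivalent to $\hat v\in\partial h\big(W(a-\alpha P^{-1}W^T\hat v)\big)$, i.e. the optimality condition above with $u^\star=a-\alpha P^{-1}W^T\hat v$, giving (ii). Uniqueness of $u^\star$ ensures $a-\alpha P^{-1}W^T\hat v$ is the same point for every dual minimizer, so the identity is unambiguous.

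The main obstacle I expect is not the convergence machinery but the bookkeeping around it: verifying the algebraic identity that recasts \eqref{eq:primal--dual-procedure} as forward--backward on $\Phi$ with $s=\beta\alpha^{-1}$, correctly identifying $L_g=\alpha\|WP^{-1}W^T\|$, and justifying $\argmin\Phi\neq\emptyset$ from the primal side through Assumption \ref{hyp: standard}(iii). An equivalent and arguably cleaner route is to equip $\R^d$ with the inner product $\langle\cdot,\cdot\rangle_P$, note that the $P$-adjoint of $W$ is $P^{-1}W^T$ and that $\|W\|_{P\to 2}^2=\|WP^{-1}W^T\|$, and then apply Lemma \ref{lemma:primal_dual_NPD} verbatim in this weighted space, thereby reducing the statement to the already-established case $P=I$.
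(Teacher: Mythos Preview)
The paper does not supply its own argument for this lemma: the proof reads in full ``See \cite{NPDIT}, Lemma 5.'' Your proposal is correct, and both routes you outline---recasting \eqref{eq:primal--dual-procedure} as forward--backward splitting on the dual objective $\Phi(v)=h^*(v)+\tfrac{\alpha}{2}\langle v,WP^{-1}W^Tv\rangle-\langle Wa,v\rangle$ with step $s=\beta\alpha^{-1}<2/L_g$, or equipping $\R^d$ with $\langle\cdot,\cdot\rangle_P$ so that the $P$-adjoint of $W$ is $P^{-1}W^T$ and Lemma \ref{lemma:primal_dual_NPD} applies verbatim---are standard and sound.
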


\begin{proof}
    See \cite{NPDIT}, Lemma 5.
\end{proof}

Algorithm \ref{alg:npdit} provides the pseudocode for the variable metric approach. The introduction of the scaling matrix \( P_n \in \mathcal{D}_{\eta} \) influences the choices of the step length \( \alpha \) and the dual variable \( \beta \). Specifically, given \( \epsilon, \delta \in (0,1) \), we set \( \beta_n = \epsilon / (\|P_n^{-1}\|\|W\|^2) \) at each step, while a backtracking strategy is used to determine the step size \( \alpha_n \). Starting with an initial approximation \( L_{-1} > 0 \) of the Lipschitz constant of \( \nabla f \), according to \cite{NPDIT}, we set \( L_n = L_{n-1} \) and \( \alpha_n = \epsilon / L_n \). We then increase \( L_n \) by a factor of \( 1/\delta \) until the following condition is satisfied:
\begin{equation}
    \label{eq:npdit_backtracking_condition}
    f(u_{n+1}) \leq f(\bar{u}_n) + \nabla f(\bar{u}_n)^T(u_{n+1} - \bar{u}_n) + \frac{L_n}{2}\|u_{n+1} - \bar{u}_n\|_{P_n}^2.
\end{equation}
As noted in \cite{NPDIT}, inequality \eqref{eq:npdit_backtracking_condition} holds for \( L_n \ge L/\eta \), where \( L \) is the Lipschitz constant of \( \nabla f \). Thus, the backtracking procedure concludes in a finite number of steps.

Before concluding this discussion on variable metric and inexact proximal gradient methods, we present a convergence result. Before stating it, we need to impose an additional condition on the scaling matrix sequence \( \{P_n\}_{n\in\mathbb{N}} \) that is
\begin{assumption}\label{hyp:Pn}
    The sequence of scaling matrices \( \{P_n\}_{n\in\mathbb{N}} \) in Algorithm \ref{alg:npdit} is chosen such that
    \begin{equation}\label{eq:cond_Pn}
        P_{n}\preceq (1+\zeta_{n-1})P_{n-1}, \  \forall \ n\geq 0, \quad \text{where } \zeta_{n-1} \geq 0, \ \sum_{n=0}^\infty\zeta_{n-1}<\infty.
    \end{equation}
\end{assumption}

\begin{theorem}[\cite{NPDIT}]\label{thm:convergenceNPDIT}
    Suppose Assumption \ref{hyp: standard} holds, and let \((\hat{u},\hat{v}) \in \mathbb{R}^d \times \mathbb{R}^{d'}\) be a solution of the primal-dual problem \eqref{eq:primal_dual}. Let \(\{(u_n,v_n^0)\}_{n\in\mathbb{N}}\) be the primal-dual sequence generated by Algorithm \ref{alg:npdit}. If the inertial parameters \(\{\gamma_n\}_{n\in\mathbb{N}}\) satisfy the condition \eqref{eq:conv_NPD_gamma}, and the scaling matrices \(\{P_n\}_{n\in\mathbb{N}}\) in Algorithm \ref{alg:npdit} satisfy Assumption \ref{hyp:Pn}, then the following statements hold:
    \begin{itemize}
        \item[(i)] The sequence \(\{(u_n,v_n^0)\}_{n\in\mathbb{N}}\) is bounded.
        \item[(ii)] Defining $ D_n = I_{d'}- \beta_n W P_{n}^{-1}W^T$ and given a saddle point \((\hat{u},\hat{v})\) solution of \eqref{eq:primal_dual}, the sequence \(\{\beta_{n-1} k_{\max}\|\hat{u} - u_n\|_{P_{n-1}}^2 + \alpha_{n-1}^2 \|\hat{v} - v_n^0\|^2_{D_{n-1}}\}_{n\in\mathbb{N}}\) converges.
        \item[(iii)] The sequence \(\{(u_n,v_n^0)\}_{n\in\mathbb{N}}\) converges to a solution of \eqref{eq:primal_dual}.
    \end{itemize}
\end{theorem}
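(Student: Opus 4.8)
The plan is to treat $\{(u_n,v_n^0)\}_{n\in\mathbb{N}}$ as a quasi-Fej\'er monotone sequence relative to the saddle-point set of \eqref{eq:primal_dual}, measured in the composite metric that already appears in item (ii). The natural Lyapunov functional is exactly
\[
\Phi_n = \beta_{n-1}k_{\max}\|\hat{u}-u_n\|^2_{P_{n-1}} + \alpha_{n-1}^2\|\hat{v}-v_n^0\|^2_{D_{n-1}},
\]
and the first thing I would check is that this is a bona fide metric in both blocks. The choice $\beta_n = \epsilon/(\|P_n^{-1}\|\|W\|^2)$ with $\epsilon\in(0,1)$ gives $\beta_n\|WP_n^{-1}W^T\|\le\epsilon<1$, since $\|WP_n^{-1}W^T\|\le\|P_n^{-1}\|\|W\|^2$; hence $D_n=I_{d'}-\beta_n WP_n^{-1}W^T$ is positive definite with eigenvalues bounded below by $1-\epsilon$, and $\|\cdot\|_{D_n}$ is a genuine norm. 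Combined with $P_n\in\mathcal{D}_\eta$, this ensures $\Phi_n$ simultaneously controls $\|\hat{u}-u_n\|$ and $\|\hat{v}-v_n^0\|$.

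The second step is the \emph{inner-loop analysis}. The dual recursion \eqref{eq:primal--dual-procedure} is a forward--backward iteration applied to the dual of the $P_n$-metric proximal subproblem $\min_u h(Wu)+\tfrac{1}{2\alpha_n}\|u-\bar{u}_n+\alpha_n P_n^{-1}\nabla f(\bar u_n)\|^2_{P_n}$, whose smooth part has gradient $v\mapsto \alpha_n WP_n^{-1}W^Tv$ with Lipschitz modulus $\alpha_n\|WP_n^{-1}W^T\|$; the inner step $\beta_n\alpha_n^{-1}$ is admissible precisely because $\beta_n<2/\|WP_n^{-1}W^T\|$. I would write the one-step descent inequality for this forward--backward operator (equivalently, exploit firm nonexpansiveness of $\prox_{\beta_n\alpha_n^{-1}h^*}$ together with cocoercivity of the dual smooth part), sum over $k=0,\dots,k_{\max}-1$, and use convexity to pass to the ergodic iterate $u_{n+1}=\tfrac{1}{k_{\max}}\sum_{j=1}^{k_{\max}}u_n^j$. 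This yields a bound on the inner primal--dual gap at $(u_{n+1},v_{n+1}^0)$ of order $1/k_{\max}$, expressed through $\|\hat v-v_n^0\|^2_{D_{n-1}}$-type quantities and telescoping terms; this is the mechanism by which a fixed, finite $k_{\max}$ still delivers a usable and summable inexactness bound, and it explains the weight $\beta_{n-1}k_{\max}$ in $\Phi_n$.

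The third and central step is the \emph{outer quasi-Fej\'er inequality}. Using the optimality characterization of $(\hat{u},\hat{v})$ from Lemma \ref{lem:tech0}, I would combine the inner-loop gap estimate with three-point expansions of $\|\hat{u}-u_{n+1}\|^2_{P_n}$ and $\|\hat{v}-v_{n+1}^0\|^2_{D_n}$, the convexity and $L$-smoothness of $f$ entering through the backtracking condition \eqref{eq:npdit_backtracking_condition}, and the bilinear cross terms $\langle W\cdot,\cdot\rangle$ of $\mL$. The inertial displacement $\bar u_n-u_n=\gamma_n(u_n-u_{n-1})$ enters only through terms bounded by $\gamma_n\|u_n-u_{n-1}\|$, summable by \eqref{eq:conv_NPD_gamma}; the change of metric is absorbed by the estimates $\|\cdot\|^2_{P_n}\le(1+\zeta_{n-1})\|\cdot\|^2_{P_{n-1}}$ from Assumption \ref{hyp:Pn}, and a matching control for $D_n$. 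The outcome is an inequality of the form $\Phi_{n+1}\le(1+\zeta_{n-1})\Phi_n+\varepsilon_n$ with $\sum_n\varepsilon_n<\infty$ and $\sum_n\zeta_{n-1}<\infty$.

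Once this inequality is in hand the conclusions follow by standard machinery. Item (i) is immediate, since such a sequence is bounded and $P_{n-1}\succeq\eta I$, $D_{n-1}\succ0$ then bound $\{u_n\}$ and $\{v_n^0\}$. Item (ii) is the classical lemma that a nonnegative sequence obeying $\Phi_{n+1}\le(1+\zeta_{n-1})\Phi_n+\varepsilon_n$ with both perturbations summable converges. For item (iii), boundedness yields a subsequence $(u_{n_j},v_{n_j}^0)\to(\bar u,\bar v)$; passing to the limit in \eqref{eq:primal--dual-procedure} and in the gradient/prox optimality conditions---the residuals vanishing because the summable errors force $\|u_{n+1}-u_n\|\to0$ and the inner gap to close---identifies $(\bar u,\bar v)$ as a saddle point via Lemma \ref{lem:tech0}. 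Applying item (ii) with this particular limit as $(\hat u,\hat v)$ makes $\Phi_n$ convergent with a subsequence tending to zero, hence $\Phi_n\to0$, i.e.\ full convergence (an Opial-type argument). I expect the genuine obstacle to be the estimate of the third step: the bookkeeping that simultaneously carries the ergodic inner gap into a decrease of the dual block $\|\hat v-v_n^0\|^2_{D_n}$, reconciles the two distinct and $n$-dependent metrics $P_n$ and $D_n$, and keeps every residual either telescoping or summable. The averaging step together with the exact weights $\beta_{n-1}k_{\max}$ and $\alpha_{n-1}^2$ in $\Phi_n$ are precisely what make these requirements compatible, so identifying the correct weights is where most of the effort concentrates.
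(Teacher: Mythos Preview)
The paper does not prove this theorem at all: it is stated with the citation \cite{NPDIT} and no proof environment follows. In other words, the paper's ``proof'' is simply a reference to the external work where the result is established, so there is nothing in the present paper to compare your argument against.

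That said, your proposal is a sensible reconstruction of the kind of argument one expects in \cite{NPDIT}: a quasi-Fej\'er inequality for the composite Lyapunov functional $\Phi_n$, built from an inner-loop ergodic gap estimate and an outer three-point expansion, followed by an Opial-type identification of the limit. The structure you outline---positive definiteness of $D_n$, firm nonexpansiveness of the inner prox step, absorption of the metric change via Assumption~\ref{hyp:Pn}, and summability of the inertial residuals via \eqref{eq:conv_NPD_gamma}---is exactly the machinery such proofs use. One point to be careful about: you assert that Assumption~\ref{hyp:Pn} yields a ``matching control for $D_n$'', but $D_n$ depends on $P_n^{-1}$ and on $\beta_n$, so the monotonicity $P_n\preceq(1+\zeta_{n-1})P_{n-1}$ does not immediately translate into a comparable bound for $D_n$; you would need to track how $\beta_n$ and $\alpha_n$ vary (they are coupled to $P_n$ through the backtracking and the rule $\beta_n=\epsilon/(\|P_n^{-1}\|\|W\|^2)$) and show the resulting perturbation is still summable. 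This is likely where the actual proof in \cite{NPDIT} spends effort, and your sketch leaves it as a claim rather than a verified step.
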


\begin{algorithm}
    \caption{Nested Primal-Dual Variable Metric method}
    \label{alg:npdit}
    \begin{algorithmic}[1]
        \State Choose \( u_0 \in \mathbb{R}^d \), \( u_{-1} = u_0 \), \( v_{-1}^{k_{\max}} \in \mathbb{R}^{d'} \), \( \epsilon \in (0,1) \), \( \delta \in (0,1) \), \( L_{-1} > 0 \), \( \eta > 0 \), \( \alpha_{-1} = \frac{\epsilon}{L_{-1}} \), \( P_{-1} \in \mathcal{D}_{\eta} \), \( \beta_{-1} = \frac{\epsilon}{\|P_{-1}^{-1}\|\|W\|^2} \)
        \For{\( n = 0, 1, \dots \)}
        \State Choose \( \gamma_n \geq 0 \) and compute the extrapolated point \( \bar{u}_n = u_n + \gamma_n (u_n - u_{n-1}) \)
        \State Choose \( P_n \in \mathcal{D}_{\eta} \)
        \State Set \( i_n = 0 \), \( L_n = L_{n-1} \), \( \alpha_n = \frac{\epsilon}{L_n} \), \( \beta_n = \frac{\epsilon}{\|P_n^{-1}\|\|W\|^2} \), \( v_n^0 = v_{n-1}^{k_{\max}} \)
        \For{\( k = 0, 1, \dots, k_{\max} - 1 \)}
        \State \( u_n^k = \bar{u}_n - \alpha_n P_n^{-1} \left(\nabla f(\bar{u}_n) + W^T v_n^k\right) \)        
        \State \( v_n^{k+1} = \prox_{\beta_n \alpha_n^{-1} h^*}(v_n^k + \beta_n \alpha_n^{-1} W u_n^k) \)
        \EndFor
        \State \( u_n^{k_{\max}} = \bar{u}_n - \alpha_n P_n^{-1} \nabla f(\bar{u}_n) - \alpha_n P_n^{-1} W^T v_n^{k_{\max}} \)
        \State \( \tilde{u}_n = \frac{1}{k_{\max}} \sum_{k=1}^{k_{\max}} u_n^k \)
        \If{\( f(\tilde{u}_n) \leq f(\bar{u}_n) + \nabla f(\bar{u}_n)^T (\tilde{u}_n - \bar{u}_n) + \frac{L_n}{2} \|\tilde{u}_n - \bar{u}_n\|^2_{P_n} \)}
        \State \( u_{n+1} = \tilde{u}_n \)
        \Else
        \State \( i_n = i_n + 1 \), \( L_n = \frac{L_{n-1}}{\delta^{i_n}} \), \( \alpha_n = \frac{\epsilon}{L_n} \)
        \State Go to Step 7
        \EndIf
        \EndFor
    \end{algorithmic}
\end{algorithm}

\section{Variable metric approach as right preconditioning}\label{eq:secright}
In this section, we give a different interpretation of the variable metric approach \eqref{eq:FB}, in the case of a stationary preconditioner $P_n=P$, when applied to the model problem \eqref{eq:problem_leastsquares}.
We will prove that the variable metric approach can be reformulated as a right preconditioning strategy for NPD.

Considering image deblurring problems, we reduce our analysis to the case of $f(u)=\frac{1}{2}\|Au - b^{\delta}\|^2$, i.e., to problem \eqref{eq:problem_leastsquares}.
Let $R \in \mathbb{R}^{d \times d}$ be invertible,
applying the right preconditioning in \eqref{eq:problem_leastsquares}, we obtain the equivalent formulation
\begin{equation}
    \label{eq:NPD_problem_right_preconditioner}
    \hat{u} =
    \argmin_{u \in \mathbb{R}^d} \frac{1}{2}\|AR^{-1}Ru - b^{\delta}\|^2 + h(Wu).
\end{equation}
Setting $\tilde{A} = AR^{-1}$, $z = Ru$, and $\tilde{W} = WR^{-1}$, the solution $\hat{u}$ of \eqref{eq:problem_leastsquares} is given by
\begin{equation*}
    \hat{u} = R^{-1} \hat{z},
\end{equation*}
where
\begin{equation}\label{eq:ls-hat}
    \hat{z} = \argmin_{z \in \mathbb{R}^d} \frac{1}{2}\|\tilde{A} z - b^{\delta}\|^2 + h(\tilde{W}z) = \argmin_{z \in \mathbb{R}^d} \tilde{f}(z) + h(\tilde{W}z).
\end{equation}
The NPD method applied to \eqref{eq:ls-hat} results in
\begin{equation} \label{eq:right_preconditioner_NPD_scheme}
    \begin{cases}
        \bar{z}_n = z_n + \gamma_n(z_n - z_{n-1}), \\
        z_{n+1} = \prox_{\alpha_n h \circ \tilde{W}}(\bar{z}_n - \alpha_n \nabla \tilde{f}(\bar{z}_n)),
    \end{cases}
\end{equation}
where, for simplicity of notation, the ``$\approx$'' symbol has been replaced with the ``$=$'' symbol
assuming that the proximity operator of the non-differentiable part can be computed exactly.
In the end, we will revert to the approximation notation
considering the same approximation scheme as the inexact variable case.

Let $u_n = R^{-1} z_n$, clearly
$\lim_n z_n = \hat z$ implies $\lim_n u_n = \hat u$.
Therefore, we want to write the scheme in equation \eqref{eq:right_preconditioner_NPD_scheme} in terms of the sequence $\{u_n\}_{\in \mathbb{N}}$.

\begin{lemma}\label{lemma:proxR}
    Let $h$ be a convex lower semicontinuous function. Let
    $W\in \R^{d'\times d}$, $R\in \R^{d\times d}$ invertible, and $\tilde W = WR^{-1}$, then
    \begin{equation*}
        \prox_{\alpha h \circ \tilde W}(z) = R \prox_{\alpha h \circ W}^{R^TR}(R^{-1}z).
    \end{equation*}
\end{lemma}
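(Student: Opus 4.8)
The plan is to prove the identity by a direct change of variables in the optimization problem that defines the (Euclidean) proximal operator on the left-hand side, turning it into the $R^TR$-weighted proximal problem on the right-hand side. First I would write out the defining minimization from Definition~\ref{def:proxP},
\[
    \prox_{\alpha h \circ \tilde W}(z) = \argmin_{y \in \R^d} h(\tilde W y) + \frac{1}{2\alpha}\|y - z\|^2 .
\]
Since $R$ is invertible, the map $y = Ru$ is a bijection of $\R^d$, so I may reparametrize the minimization over $y$ as a minimization over $u$. Under this substitution the two terms transform cleanly: using $\tilde W = WR^{-1}$ gives $\tilde W y = WR^{-1}Ru = Wu$, while the quadratic penalty becomes
\[
    \|y - z\|^2 = \|R u - z\|^2 = \|R(u - R^{-1}z)\|^2 = (u - R^{-1}z)^T R^T R\,(u - R^{-1}z) = \|u - R^{-1}z\|_{R^TR}^2 .
\]

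Combining these two identities, the objective coincides with $h(Wu) + \frac{1}{2\alpha}\|u - R^{-1}z\|_{R^TR}^2$, whose minimizer over $u$ is exactly $\prox^{R^TR}_{\alpha h \circ W}(R^{-1}z)$ by the weighted part of Definition~\ref{def:proxP}. Because $y \mapsto u = R^{-1}y$ is a bijection, the minimizer $y^\star$ of the left-hand problem and the minimizer $u^\star$ of the right-hand problem satisfy $y^\star = R u^\star$, which is precisely the claimed identity.

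Two points require a brief justification rather than real work. I should first confirm that $R^TR$ is an admissible weight, i.e. that it lies in some $\mathcal{D}_{\eta}$: since $R$ is invertible, $R^TR$ is symmetric positive definite with smallest eigenvalue $\sigma_{\min}(R)^2 > 0$, so one may take $\eta = \sigma_{\min}(R)^2$, and $\|\cdot\|_{R^TR}$ is genuinely a norm. Second, both argmins are attained and unique, since the quadratic term is strongly convex (as $R^TR \succ 0$) and $h$ is proper, convex, and lower semicontinuous, making each objective strongly convex and coercive. The main ``obstacle'' is therefore not analytical but purely a matter of carrying out the change of variables carefully and recording that invertibility of $R$ makes the reparametrization a bijection of the feasible set; there is no genuine difficulty beyond this bookkeeping.
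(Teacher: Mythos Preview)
Your proof is correct and follows essentially the same change-of-variables argument as the paper: both substitute $y = Ru$ in the defining minimization, use $\tilde W y = Wu$ and $\|y-z\|^2 = \|u - R^{-1}z\|_{R^TR}^2$, and then read off the weighted prox. Your version adds the (welcome but not strictly necessary) remarks that $R^TR \in \mathcal{D}_{\sigma_{\min}(R)^2}$ and that strong convexity guarantees existence and uniqueness of the minimizers.
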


\begin{proof}
    We observe that, for a function $g:\R^d\to\R$, it holds
    \[
        \hat z \in \argmin_{\tilde z} \ g(R^{-1}\tilde z)\ \Leftrightarrow\ 
        R^{-1} \hat z \in \argmin_{\tilde u} \ g(\tilde u)\ \Leftrightarrow\ 
        \hat z \in R \argmin_{\tilde u} \ g(\tilde u),
    \]
    Therefore, by fixing $u = R^{-1} z$, we have
    \begin{align*}
        \prox_{\alpha h \circ \tilde W}(z) & = \argmin_{\tilde z} \ h(WR^{-1}\tilde z) + \frac{1}{2\alpha} \|z-\tilde z\|^2 \\
        & = R \argmin_{\tilde u} \ h(W \tilde u) + \frac{1}{2\alpha} \|R(u-\tilde u)\|^2
        \\ & = R
        \prox_{\alpha h \circ W}^{R^TR}(R^{-1}z).
    \end{align*}
\end{proof}

Recalling that $f(u)=\frac{1}{2}\|Au - b^{\delta}\|^2$ and $\tilde f(u)=\frac{1}{2}\|\tilde Au - b^{\delta}\|^2$, it holds
\begin{equation}\label{eq:tildef}
    \nabla \tilde f(Rx) = R^{-T} \nabla f(x).
\end{equation}
Therefore, using Lemma \ref{lemma:proxR} and equation \eqref{eq:tildef}, from the second equation in \eqref{eq:right_preconditioner_NPD_scheme}, for $\bar u = R^{-1} \bar z$ we have
\begin{align*}
    u_{n+1} & = R^{-1} z_{n+1}                                                                             \\
            & = \prox_{\alpha_n h \circ W}^{R^TR}(R^{-1} \bar z_n - \alpha_n R^{-1} \nabla \tilde f(\bar z_n)) \\
            & = \prox_{\alpha_n h \circ W}^{R^TR}(\bar u_n - \alpha_n (R^T R)^{-1} \nabla f(\bar u_n)).
\end{align*}

Finally, adding also the extrapolation step and replacing back the approximation symbol in the second equation of \eqref{eq:right_preconditioner_NPD_scheme}, the NPD method applied to problem \eqref{eq:NPD_problem_right_preconditioner} is given by the iterative scheme
\begin{equation}
    \label{eq:right_preconditioner_FBS_scheme_explicit}
    \begin{cases}
        \bar{u}_n= u_n + \gamma_n(u_n-u_{n-1}), \\
        u_{n+1} \approx \prox_{\alpha_n h \circ W}^{R^TR}(\bar u_n - \alpha_n (R^T R)^{-1} \nabla f(\bar u_n)).
    \end{cases}
\end{equation}
Note that, given a stationary preconditioner $P_n = P \in \mathcal{D}_{\eta}$, by choosing $R=P^{\frac{1}{2}}$ such that $R^T R = P$, then the iteration \eqref{eq:right_preconditioner_FBS_scheme_explicit} is exactly the variable metric scheme \eqref{eq:FB}.

\section{Preconditioned Nested Primal-Dual (PNPD)}
\sectionmark{Preconditioned Nested Primal-Dual}
\label{ch:PNPD}
In Section~\ref{eq:secright}, we discussed how NPDIT applied to problem \eqref{eq:problem_leastsquares} can be interpreted as a right preconditioning strategy. Hence, it is natural to explore the effects of the left preconditioning strategy instead of the right one.
The seminal idea is to accelerate the convergence of the NPD method replacing the gradient descent step inside the proximal evaluation with a preconditioned iteration.
Focusing on the image deblurring problem, we want to define an iterative scheme faster than NPD, potentially requiring less computational effort at each iteration than NPDIT. We achieved this by modifying the data fidelity norm with the one induced by an appropriate preconditioner $S\in \mathcal{S}_+(\mathbb{R}^s)$.

The first part of this section will be devoted to deriving and analyzing our proposal and discussing possible strategies for choosing the preconditioner~$S$. Moreover, we will show how with proper choices of the preconditioner and the non-smooth term $h \circ W$, our method coincides with standard proximal gradient approaches such as FISTA and ITTA. A possible drawback of our approach is that it changes the norm of the data fidelity term computing a solution potentially different from the solution of \eqref{eq:problem_leastsquares}. This issue will be addressed in the final part of this section, where we introduce non-stationarity in the preconditioner letting the sequence of preconditioners $S_n$ converge to the identity operator. This will allow us to compare our proposal with standard approaches that use the $\ell^2$-norm in the data fidelity term. 

Similarly to equation \eqref{eq:problem_leastsquares}, 
given $S\in \mathcal{S}_+(\mathbb{R}^s)$, we consider the optimization problem
\begin{align}
    \label{eq:model_PNPD}
    \begin{split}
        \argmin_{u\in\mathbb{R}^d} f_S(u) + h(Wu) = \argmin_{u\in\mathbb{R}^d}\frac{1}{2} \|Au-b^{\delta}\|_{S^{-1}}^2 + h(Wu).
    \end{split}
\end{align}
In the data fidelity term
\begin{equation*}
    f_S(u) = \frac{1}{2}\|S^{-\frac{1}{2}}(Au-b^{\delta})\|^2,
\end{equation*}
the linear operator $S^{\frac{1}{2}}$ can be interpreted as a left preconditioner for the linear system \eqref{eq:Model_Equation}. If we further assume that there exists $P\in \mathcal{S}_+(\R^d)$ such that
\begin{equation}\label{eq:commutative_PNPD}
    P^{-1}A^T = A^T S^{-1},
\end{equation}
then it holds
\begin{equation}\label{eq:GfS}
    \nabla f_S(u) = A^TS^{-1}(Au-b^{\delta}) = P^{-1}\nabla f(u),
\end{equation}
where $f(u) = \frac{1}{2}\|Au-b^{\delta}\|^2$ as in equation \eqref{eq:problem_leastsquares}.

Therefore, applying the NPD algorithm to problem~\eqref{eq:model_PNPD}, under the assumption~\eqref{eq:commutative_PNPD}, we obtain a two-step iterative scheme, named \textit{Preconditioned Nested Primal-Dual} (PNPD), defined as
\begin{equation}
    \label{eq:PNPD_scheme}
    \begin{cases}
        \bar{u}_n= u_n + \gamma_n(u_n-u_{n-1}), \\
        u_{n+1} \approx \prox_{\alpha_n h \circ W}(\bar u_n - \alpha_n P^{-1} \nabla f(\bar u_n)).
    \end{cases}
\end{equation}
The main difference between the PNPD method and NPDIT is that we are using the
standard definition of proximity operator of the non--differentiable part
$h\circ W$. From this, it follows that the scaling matrix $P$ no longer affects
the dual variable in the nested iteration and neither the dual step length
$\beta_n$ that now relies only on the norm of the operator $W^TW$ as in NPD. 
 This implies that in PNPD the preconditioner acts only on the primal part, while in NPDIT it affects also the dual part.
It follows that, as the inner iterations increase, the computational cost of PNPD
will become lower compared to that of NPDIT.

Algorithm \ref{alg:pnpd} reports the pseudocode of the proposed PNPD method.

\begin{algorithm}
    \caption{PNPD}
    \label{alg:pnpd}
    \begin{algorithmic}[1]
        \State Choose \( u_0 \in \mathbb{R}^d \), \(
        P\in\mathcal{S}_+(\R^d) \), \( 0<\alpha<\frac{1}{L_S} \), \(
        0<\beta<\frac{1}{\|W\|^2} \), \( k_{\mathrm{max}} \in \mathbb{N} \),
        \(\{\gamma_n\}_{n\in \mathbb{N}} \subseteq \mathbb{R}_{\ge 0}\).
        \State \( u_{-1} = u_0 \)
        \State \( v_{0}^0 = 0 \)
        \For{\( i = 0, 1, \dots \)}
        \State \(\bar{u}_n = u_n + \gamma_n (u_n - u_{n-1})\)
        \State \( u_{n+\frac{1}{2}} = \bar{u}_n - \alpha P^{-1}\nabla f(\bar{u}_n) \)
        \For{\( k = 0, 1, \dots, k_{\text{max}}-1 \)}
        \State \( u_{n}^{k} = u_{n+\frac{1}{2}} - \alpha W^T v_{n}^k \)
        \State \( v_{n}^{k+1} = \prox_{\beta \alpha^{-1} h^*}(v_{n}^k + \beta \alpha^{-1} W u_{n}^{k}) \)
        \EndFor
        \State \( v_{n+1}^0 = v_{n}^{k_{\text{max}}} \)
        \State \( u_{n}^{k_{\text{max}}} = u_{n+\frac{1}{2}} - \alpha W^T v_{n}^{k_{\text{max}}} \)
        \State \( u_{n+1} = \frac{1}{k_{\text{max}}} \sum_{j=1}^{k_{\text{max}}} u_{n}^{j} \)
        \EndFor
    \end{algorithmic}
\end{algorithm}

Currently, the preconditioner $P$ is fixed for each iteration $n$, but later, we
will show how it can be chosen non-stationary varying with $n$.

To conclude this first part regarding the PNPD method, we prove a convergence result toward the solution of the initial problem \eqref{eq:model_PNPD} under suitable assumptions.

\begin{theorem}[Convergence of PNPD]
    \label{thm:convergence_pnpd}
    Let $f(u)=\frac{1}{2} \|Au-b^{\delta}\|^2$. Suppose that $h$ and $W$ satisfy Assumption \ref{hyp: standard}. Let $\{(u_n, v_n^0)\}_{n\in \mathbb{N}}$ be the primal--dual sequence generated by the PNPD method (Algorithm \ref{alg:pnpd}) with $\alpha_n = \alpha \in \left(0, \| P^{-1} A^T A \|^{-1}\right]$ and $\beta_n = \beta \in (0, \|W\|^{-2})$ for all $n\in \mathbb{N}$.
    Suppose also that the inertial parameters $\{\gamma_n\}_{n\in\mathbb{N}}$
    satisfy the condition \eqref{eq:conv_NPD_gamma} and that $S\in\mathcal{S}_+(\R^s)$
    and $P\in\mathcal{S}_+(\R^d)$ satisfy equation \eqref{eq:commutative_PNPD}.

    Then, the following statements hold:
    \begin{itemize}
        \item[(i)] the sequence $\{(u_n,v_n^0)\}_{n\in\mathbb{N}}$ is bounded;
        \item[(ii)] the primal sequence $\{u_n\}_{n\in \mathbb{N}}$ converges to a solution of the initial problem~\eqref{eq:model_PNPD}.
    \end{itemize}
\end{theorem}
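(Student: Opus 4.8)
The plan is to reduce the statement to the convergence theorem for the unpreconditioned NPD method (Theorem~\ref{thm:convergence_npd}), applied to a modified smooth term. The key observation is that, under the commutativity condition~\eqref{eq:commutative_PNPD}, a preconditioned gradient step on $f$ is exactly an ordinary gradient step on $f_S$: by~\eqref{eq:GfS} we have $P^{-1}\nabla f(u)=\nabla f_S(u)$, so the update $u_{n+\frac12}=\bar u_n-\alpha P^{-1}\nabla f(\bar u_n)$ in Algorithm~\ref{alg:pnpd} coincides with $\bar u_n-\alpha\nabla f_S(\bar u_n)$. Since the preconditioner $P$ appears nowhere else in Algorithm~\ref{alg:pnpd}---in particular the inner dual loop and the dual step length $\beta$ are literally those of Algorithm~\ref{alg:npd}---the whole PNPD iteration is identical to the NPD iteration run on the composite objective $f_S+h\circ W$ of problem~\eqref{eq:model_PNPD}. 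It therefore suffices to verify that the triple $(f_S,h,W)$ meets the hypotheses of Theorem~\ref{thm:convergence_npd} and then to invoke it.

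First I would confirm Assumption~\ref{hyp: standard} for $(f_S,h,W)$. The function $f_S(u)=\tfrac12(Au-b^\delta)^TS^{-1}(Au-b^\delta)$ is convex and differentiable, and its Hessian $A^TS^{-1}A$ is symmetric positive semidefinite since $S^{-1}\in\mathcal{S}_+(\R^s)$; hence $\nabla f_S$ is Lipschitz with constant $L_S=\|A^TS^{-1}A\|$, the largest eigenvalue of that matrix. Right-multiplying~\eqref{eq:commutative_PNPD} by $A$ gives $A^TS^{-1}A=P^{-1}A^TA$, so that $\|P^{-1}A^TA\|=\|A^TS^{-1}A\|=L_S$ and the prescribed range $\alpha\in(0,\|P^{-1}A^TA\|^{-1}]$ is exactly $(0,1/L_S]$, as required by Theorem~\ref{thm:convergence_npd}. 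The conditions $\beta\in(0,\|W\|^{-2})$ and the summability assumption~\eqref{eq:conv_NPD_gamma} on $\{\gamma_n\}$ transfer verbatim, while Assumption~\ref{hyp: standard}(ii)--(iii) for $h$ and $W$ hold by hypothesis.

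The delicate point---and the step I expect to be the main obstacle---is the remaining requirement of Assumption~\ref{hyp: standard}, namely the existence of a minimizer for the reweighted problem~\eqref{eq:model_PNPD}, since the standing assumptions only grant a solution of the unweighted problem~\eqref{eq:problem_leastsquares}. I would settle this by a recession-function argument. For any $M\in\mathcal{S}_+(\R^s)$ the quadratic $u\mapsto\tfrac12\|Au-b^\delta\|_M^2$ has recession function equal to $0$ on $\ker A$ and $+\infty$ elsewhere, independently of $M$; hence the two fidelity terms of~\eqref{eq:problem_leastsquares} and~\eqref{eq:model_PNPD} share the same recession function. Adding the common term $h\circ W$, whose domain is nonempty by Assumption~\ref{hyp: standard}(iii), the full objectives of the two problems have identical recession functions. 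Since, for a proper lower semicontinuous convex function $g$, the attainment of the infimum is governed entirely by $g_\infty$---precisely, $\argmin g\neq\emptyset$ if and only if $\{d:g_\infty(d)\le0\}$ is a linear subspace---the solvability of~\eqref{eq:problem_leastsquares} forces the solvability of~\eqref{eq:model_PNPD}. With every hypothesis of Theorem~\ref{thm:convergence_npd} verified for $(f_S,h,W)$, applying that theorem yields at once the boundedness of $\{(u_n,v_n^0)\}$ in~(i) and the convergence of the primal iterates $\{u_n\}$ to a solution of~\eqref{eq:model_PNPD} in~(ii).
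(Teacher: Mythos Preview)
Your core strategy is exactly the paper's: observe via~\eqref{eq:GfS} that Algorithm~\ref{alg:pnpd} is literally Algorithm~\ref{alg:npd} applied to $f_S+h\circ W$, check that $\nabla f_S$ is Lipschitz with constant $L_S=\|P^{-1}A^TA\|$, and invoke Theorem~\ref{thm:convergence_npd}. The paper's proof does nothing more than this; in particular it does \emph{not} separately justify that the reweighted problem~\eqref{eq:model_PNPD} admits a minimizer---it simply takes Assumption~\ref{hyp: standard} as holding for $(f_S,h,W)$.

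Your attempt to fill that gap via recession functions is where things go wrong. The assertion ``$\argmin g\neq\emptyset$ if and only if $\{d:g_\infty(d)\le0\}$ is a linear subspace'' is false in both the direction you need and in the claim that $g_\infty$ alone determines attainment. For the first, take $g(x)=\max(0,x)$ on $\R$: the minimum is attained on $(-\infty,0]$, yet the recession cone is $(-\infty,0]$, not a subspace. For the second, compare $g_1(x)=e^x$ with $g_2(x)=\max(e^x,1)$: both have recession function $0$ on $(-\infty,0]$ and $+\infty$ on $(0,\infty)$, but only $g_2$ attains its infimum. So one cannot transfer solvability from~\eqref{eq:problem_leastsquares} to~\eqref{eq:model_PNPD} merely by matching recession functions. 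If you want to close this point rigorously you need a different argument---e.g.\ show directly that the recession cone of $f_S+h\circ W$ is a subspace under additional structure on $h$, or simply add the solvability of~\eqref{eq:model_PNPD} to the hypotheses, which is effectively what the paper does.
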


\begin{proof}
    As anticipated, the NPD method applied to the problem \eqref{eq:model_PNPD} gives exactly the iteration of PNPD. We only have to prove that $\nabla f_S$ is $L_S$-Lipschitz continuous with $L_S = \| P^{-1} A^T A \|$. Thanks to equation \eqref{eq:GfS}, we have
    \begin{equation*}
        \| \nabla f_S(x) - \nabla f_S(y) \| =  \| P^{-1} \nabla f(x) - P^{-1} \nabla f(y) \|
        \leq \|P^{-1} A^T A \| \| x - y \|.
    \end{equation*}

    The thesis follows applying Theorem \ref{thm:convergence_npd} to problem \eqref{eq:model_PNPD}.
\end{proof}

\subsection{A polynomial choice for P}
A reasonable choice for the preconditioner $P$ and the associated matrix $S$, that satisfies the condition \eqref{eq:commutative_PNPD}, is
\begin{equation}
    \label{eq:pnpd_P_numerical}
    P = A^T A + \nu I, \qquad S = A A^T + \nu I,
\end{equation}
with $\nu > 0$. This is inspired by the iterated Tikhonov method, which coincides with the Levenberg–Marquardt method applied to linear problems \cite{Engl1996-yp}. More in general, we can show that the identity in equation \eqref{eq:commutative_PNPD} is satisfied whenever $P$ is in the set of some particular polynomials of $A^TA$ and $S$ is a corresponding polynomial of $AA^T$.

Defined the set of polynomials with non-negative coefficients as
\begin{equation}
    \mathcal{Q} = \left\{ \sum_{i=0}^{k} c_i x^i \mid c_i\ge0,\ c_0>0,\  k\in\mathbb{N} \right\},
\end{equation}
and the set of matrices
\begin{equation}
    \mathcal{P} = \left\{P\in\R^{d\times d}\ |\ P=p(A^T A)\ \wedge\ p\in\mathcal{Q} \right\},
\end{equation}
if $P \in \mathcal{P}$, then, by definition $P=p(A^TA)$ for some $p\in \mathcal{Q}$, and thus $P\in \mathcal{S}_+(\mathbb{R}^d)$ since $c_0 >0$.
We also observe that for each $P = p(A^TA) \in \mathcal{P}$, we can define a corresponding $S = p(A A^T) \in \mathcal{S}_+(\mathbb{R}^s)$,
which satisfies the equation~\eqref{eq:commutative_PNPD}.
Then, the following results hold.
\begin{proposition}
    \label{pnpd_P_commutation_rule_polynomials}
    Let $A\in\mathbb{R}^{s\times d}$, $P = p(A^T A)\in\mathcal{P}$ and $S = p(A A^T)$. Then it holds
    \begin{equation}\label{eq:PSthesis}
        P^{-1}A^T = A^T S^{-1}.
    \end{equation}
\end{proposition}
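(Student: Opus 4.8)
The plan is to recast the claimed identity \eqref{eq:PSthesis} into a polynomial-free commutation statement and then reduce it to a single elementary relation. Since $p\in\mathcal{Q}$ has positive constant term $c_0>0$, both $P=p(A^TA)$ and $S=p(AA^T)$ dominate $c_0 I$ and are therefore positive definite, hence invertible; this is what places them in $\mathcal{S}_+(\R^d)$ and $\mathcal{S}_+(\R^s)$ respectively. Multiplying the target equation $P^{-1}A^T=A^TS^{-1}$ on the left by $P$ and on the right by $S$, I see that \eqref{eq:PSthesis} is equivalent to
\[
    P A^T = A^T S, \qquad \text{i.e.} \qquad p(A^TA)\,A^T = A^T\,p(AA^T).
\]
It therefore suffices to prove this last identity, after which left multiplication by $P^{-1}$ and right multiplication by $S^{-1}$ recovers the thesis.

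The heart of the matter is the single relation $(A^TA)A^T = A^T(AA^T)$, which holds simply by associativity of matrix multiplication. I would promote this to every power by induction on $i$: assuming $(A^TA)^{i-1}A^T = A^T(AA^T)^{i-1}$, I compute
\[
    (A^TA)^i A^T = (A^TA)^{i-1}\big((A^TA)A^T\big) = (A^TA)^{i-1}A^T(AA^T) = A^T(AA^T)^{i-1}(AA^T) = A^T(AA^T)^i,
\]
so the commutation propagates through all powers. Writing $p(x)=\sum_{i=0}^k c_i x^i$ and summing these identities against the coefficients $c_i$ then yields
\[
    P A^T = \sum_{i=0}^k c_i (A^TA)^i A^T = \sum_{i=0}^k c_i A^T (AA^T)^i = A^T S,
\]
which is exactly the reduced identity.

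I expect no serious obstacle in this argument; it is essentially bookkeeping around the base commutation $(A^TA)A^T=A^T(AA^T)$. The only point requiring a word of care is the invertibility of $P$ and $S$ used in the reduction step, which is guaranteed precisely by the condition $c_0>0$ built into the definition of $\mathcal{Q}$ (ensuring $P\succeq c_0 I\succ 0$ and $S\succeq c_0 I\succ 0$). Note that the nonnegativity of the remaining coefficients $c_i$ plays no role in \eqref{eq:PSthesis} itself; it is needed only to keep $P$ and $S$ symmetric positive definite so that the surrounding PNPD framework applies. Combining the reduction with the inductive commutation relation completes the proof.
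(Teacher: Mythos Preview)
Your proposal is correct and follows essentially the same route as the paper: reduce \eqref{eq:PSthesis} to the equivalent statement $PA^T=A^TS$ via invertibility of $P$ and $S$ (coming from $c_0>0$), and then verify $p(A^TA)A^T=A^Tp(AA^T)$ termwise using $(A^TA)^iA^T=A^T(AA^T)^i$. The paper simply writes the termwise identity in one line without the explicit induction, but the argument is the same.
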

\begin{proof}
    Since $P\in\mathcal{S}_+(\mathbb{R}^d)$ and $S\in\mathcal{S}_+(\mathbb{R}^s)$, they are invertible. Moreover, it holds
    \begin{equation*}
        A^T S = A^T p(A A^T)                     = \sum_{i=0}^{k} c_i A^T (A A^T)^i  = \sum_{i=0}^{k} c_i (A^T A)^i A^T  = p(A^T A) A^T                     = P A^T,
    \end{equation*}
    which is equivalent to \eqref{eq:PSthesis}.
\end{proof}

\begin{proposition}
    \label{prop:norm_P_inv_A^T_A}
    Let $p(x) = \sum_{i=0}^{k} c_i x^i \in\mathcal{Q}$, $A\in\mathbb{R}^{s\times d}$, and $P = p(A^T A)\in \mathcal{P}$.
    Then, it holds
    \begin{equation}
        \|P^{-1} A^T A\| \le \left(c_1 + c_0 \|A\|^{-2}\right)^{-1}.
    \end{equation}
\end{proposition}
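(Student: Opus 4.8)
The plan is to reduce the operator-norm bound to a scalar optimization over the spectrum of $A^TA$ via functional calculus. First I would note that $A^TA$ is symmetric positive semidefinite with eigenvalues $\lambda_1,\dots,\lambda_d$ lying in $[0,\|A\|^2]$, since $\|A\|^2=\|A^TA\|=\max_j\lambda_j$. Because $P=p(A^TA)$ is a polynomial in $A^TA$, the matrices $P^{-1}$ and $A^TA$ are simultaneously diagonalizable, so $P^{-1}A^TA=g(A^TA)$ with $g(x)=x/p(x)$, and
\[
\|P^{-1}A^TA\| = \max_{1\le j\le d} g(\lambda_j) = \max_{1\le j\le d} \frac{\lambda_j}{p(\lambda_j)}.
\]

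Next I would bound $g$ on $[0,\|A\|^2]$ using the non-negativity of the coefficients. Since $c_i\ge 0$ and $\lambda\ge 0$, every higher-order term in $p(\lambda)=\sum_{i=0}^{k} c_i\lambda^i$ is non-negative, so $p(\lambda)\ge c_0+c_1\lambda$ and therefore
\[
g(\lambda)=\frac{\lambda}{p(\lambda)}\le \frac{\lambda}{c_0+c_1\lambda}, \qquad \lambda\ge 0.
\]
The right-hand side equals $(c_0/\lambda+c_1)^{-1}$, which is monotonically increasing in $\lambda$; hence its maximum over $[0,\|A\|^2]$ is attained at $\lambda=\|A\|^2$, yielding
\[
\frac{\|A\|^2}{c_0+c_1\|A\|^2}=\bigl(c_1+c_0\|A\|^{-2}\bigr)^{-1},
\]
which is exactly the asserted bound.

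I do not expect a serious obstacle; the only points needing a little care are justifying that $P^{-1}A^TA$ is diagonal in the eigenbasis of $A^TA$, which is immediate from $P$ being a polynomial in $A^TA$, and handling a possible eigenvalue $\lambda=0$ when $A$ is rank-deficient, where $g(0)=0$ is trivially dominated. The decisive step is the monotonicity of $\lambda\mapsto \lambda/(c_0+c_1\lambda)$ combined with the spectral inclusion in $[0,\|A\|^2]$, which together pin the maximum at the top of the spectrum.
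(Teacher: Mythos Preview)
Your proposal is correct and follows essentially the same approach as the paper: both reduce the norm computation to the scalar function $\lambda\mapsto \lambda/p(\lambda)$ on the spectrum of $A^TA$, bound $p(\lambda)\ge c_0+c_1\lambda$ using non-negativity of the coefficients, and then use monotonicity to locate the maximum at $\lambda=\|A\|^2$. The only cosmetic difference is that the paper writes this out via the SVD of $A$ rather than invoking functional calculus directly.
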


\begin{proof}
    Using the singular value decomposition of $A = U \Sigma V^T$, we have
    $A^TA=V \Sigma^T \Sigma V^T$, and hence
    \begin{equation*}
        P^{-1} A^T A = p(A^T A)^{-1} A^T A= p(V \Sigma^T \Sigma V^T)^{-1} V \Sigma^T \Sigma V^T  = V D V^T,
    \end{equation*}
    where $D = p(\Sigma^T \Sigma)^{-1} \Sigma^T \Sigma \in \R^{d \times d}$ is a diagonal matrix.
    Let $r$ be the rank of $A$ and  $\sigma_1 \geq \sigma_2 \geq \dots \geq \sigma_r>0$ its positive singular values, then the diagonal entries of $D$ are
    \begin{equation}
        d_j=
        \begin{cases}
            \frac{\sigma_j^2}{p(\sigma_j^2)}, & j=1,\dots, r,    \\
            0,                                & j = r+1,\dots,d.
        \end{cases}
    \end{equation}
    Since $c_i\geq 0$ for $i=0,\dots,k$, we have
    \begin{equation}\label{eq:dim_polynomial}
        p(\sigma_j^2) \ge c_1 \sigma_j^2 + c_0.
    \end{equation}
    Dividing both sides of equation \eqref{eq:dim_polynomial} by $\sigma_j^2$, we have
    \begin{equation*}
        \frac{p(\sigma_j^2)}{\sigma_j^2} \ge c_1 + \frac{c_0}{\sigma_j^2} \ge c_1 + \frac{c_0}{\sigma_1^2} = c_1 + c_0 \|A\|^{-2},
    \end{equation*}
    which is equivalent to
    \begin{equation}\label{eq:magci}
        \frac{\sigma_j^2}{p(\sigma_j^2)} \le \left(c_1 + c_0 \|A\|^{-2}\right)^{-1}.
    \end{equation}
    The thesis follows by observing that
    \[ \|P^{-1} A^T A\| = \|D\| =
        \max_{j=1,\dots,r} \frac{\sigma_j^2}{p(\sigma_j^2)}
        \leq \left(c_1 + c_0 \|A\|^{-2}\right)^{-1},
    \]
    thanks to equation \eqref{eq:magci}.
\end{proof}

From Proposition \ref{prop:norm_P_inv_A^T_A} and Theorem \ref{thm:convergence_pnpd} follows that, if $P\in\mathcal{P}$, the convergence of PNPD is guaranteed whenever we choose $0<\alpha<\left(c_1 + c_0 \|A\|^{-2}\right)$.

\subsection{PNPD as an inexact version of FISTA and ITTA}
When the proximity operator
$\prox_{\alpha_k h \circ W}$
in equation \eqref{eq:PNPD_scheme} is known in closed form, then the PNPD method \eqref{eq:PNPD_scheme} can be rewritten replacing the ``$\approx$'' with the ``$=$'' symbol.
This is the case explored in this subsection, where we consider
\begin{equation}\label{eq:h1}
    h(Wu) =\lambda \|u\|_1,
\end{equation} such that
\begin{equation}
    \label{pnpd_prox_soft}
    \prox_{\alpha_n h \circ W} (u) = \prox_{\alpha_n \lambda \|\cdot\|_1} (u) = \bm{S}_{\alpha_n \lambda}(u),
\end{equation}
where $\bm{S}_{\alpha}$ is the soft-thresholding function
\begin{equation*}
    \bm{S}_{\alpha}(u) = {\rm sign}(u)(|u|-\alpha)_+.
\end{equation*}
Note that in the regularization term \eqref{eq:h1}, usually $u$ represents the wavelet coefficients of the image, and the matrix $A$ in equation \eqref{eq:problem_leastsquares} is given by the product between the convolution operator and the inverse wavelet transform.

Setting $P=I$, equation \eqref{eq:PNPD_scheme} becomes
\begin{align*}
    \begin{cases}
        \bar u_n = u_n + \gamma_n (u_n - u_{n-1}),                                   \\
        u_{n+1} = \bm{S}_{\alpha_n \lambda}(\bar u_n - \alpha_n \nabla f(\bar u_n)), \\
    \end{cases}
\end{align*}
that is the PNPD iterations coincide with the iterative scheme of the
\textit{Fast Iterative Shrinkage--thresholding Algorithm} (FISTA) \cite{Beck-Teboulle-2009b}
applied to the optimization problem
\begin{equation}
    \label{eq:opt_problem_FISTA_as_exact_PNPD}
    \argmin_{u\in \R^d} f(u) + \lambda \|u\|_1.
\end{equation}
Of course, removing the extrapolation step, i.e., by setting $\gamma_n=0$, we have the iterations of the \textit{Iterative Shrinkage Thresholding Algorithm} (ISTA) \cite{Daubechies-et-al-2004}.

Consider now the variational model \eqref{eq:problem_leastsquares} and
fix $P = A^T A + \nu I$, we obtain
\begin{equation}
    \label{pnpd_itta_gradient}
    \begin{split}
        P^{-1}\nabla f(u) = P^{-1}A^T(Au-b^{\delta}) = A^T(AA^T+\nu I)^{-1}(Au-b^{\delta}).
    \end{split}
\end{equation}
Therefore, choosing $\alpha_n=1$ and $\gamma_n=0$ in \eqref{eq:PNPD_scheme}, the exact version of PNPD without extrapolation becomes
\begin{equation*}
    u_{n+1} = \bm{S}_{\lambda}\left(u_n - A^T(AA^T+\nu I)^{-1}(Au_n-b^{\delta})\right),
\end{equation*}
which is the
\textit{Iterated Tikhonov Thresholding Algorithm} (ITTA) proposed in~\cite{Huang-etal-2013} for solving
the optimization problem
\begin{equation*}
    \argmin_{u\in\R^d} \frac{1}{2}\|Au-b^{\delta}\|^2_{(AA^T+\nu I)^{-1}} + \lambda \|u\|_1 = \argmin_{u\in\R^d} \frac{1}{2}\|Au-b^{\delta}\|^2_{S^{-1}} + \lambda \|u\|_1,
\end{equation*}
with $S=AA^T+\nu I$. Note the similarity with our variational problem \eqref{eq:model_PNPD}.

\subsection{PNPD with a non-stationary preconditioner}
\label{subsec:PNPD_non_stationary}

The preconditioner $P = A^T A + \nu I$ used in \cite{NPDIT,Huang-etal-2013}, requires the estimation of the parameter $\nu$ that affects the convergence speed and stability of the method, c.f. subsection \ref{ssec:stab}.
Therefore, as done for non-stationary iterated Tikhonov, $\nu$ could be chosen as a non-stationary sequence as in \cite{NPDIT,Huang-etal-2013,cai2016regularization,donatelli2012nondecreasing}. In a more general framework, $P_n$ could change at each iteration and the PNPD method~\eqref{eq:PNPD_scheme} becomes
\begin{equation}
    \label{eq:PNPD_nonstat}
    \begin{cases}
        \bar{u}_n= u_n + \gamma_n(u_n-u_{n-1}), \\
        u_{n+1} \approx \prox_{\alpha_n h \circ W}(\bar u_n - \alpha_n P_n^{-1} \nabla f(\bar u_n)).
    \end{cases}
\end{equation}

In this paper, we propose a class of preconditioners $P_n$ of the form
\begin{equation}
    \label{eq:non_stationary_preconditioner}
    P_n = (1-\nu_n)A^TA + \nu_n I,
\end{equation}
with $\{\nu_n\}_{n\in\mathbb{N}}$ such that $0<\nu_n\leq 1$ for all $n\in\mathbb{N}$. We propose three possible choices for the sequence $\{\nu_n\}_{n\in\mathbb{N}}$ that are
\begin{equation}
    \begin{split}
         & \nu_n = \frac{0.85^n}{2} + \nu_\infty \quad \text{with} \quad \nu_\infty \in \left(0,\frac{1}{2}\right); \label{eq:litterature_scheduler}
    \end{split}
\end{equation}
\begin{equation}
    \begin{split}
         & \nu_n = \left(1-\frac{1}{\sqrt{n+1}}\right)(1-\nu_0) + \nu_0 \quad \text{with} \quad \nu_0 \in (0,1); \label{eq:npdit_scheduler}
    \end{split}
\end{equation}
\begin{equation}
    \begin{split}
         & \nu_n = \min\{c^{n-n_{\text{bt}}}, 1\} \quad \text{with} \quad n_\text{bt}\in \mathbb{N}, \quad c = \nu_0^{-\frac{1}{n_\text{bt}}},\quad \text{and} \quad \nu_0 \in (0,1).
        \label{eq:bootstrap_scheduler}
    \end{split}
\end{equation}

The decreasing sequence \eqref{eq:litterature_scheduler} is largely used in the literature \cite{Huang-etal-2013,cai2016regularization,NPDIT} and is inspired by the seminal paper on the convergence of the non-stationary iterated Tikhonov method \cite{hanke1998nonstationary}. On the other hand, nondecreasing sequences, if properly chosen, can provide fast and stable convergence as proved in \cite{donatelli2012nondecreasing}. This is the case of the sequence \eqref{eq:npdit_scheduler} proposed in \cite{NPDIT} for NPDIT.
The sequence \eqref{eq:bootstrap_scheduler} is also an increasing
sequence but with a different growth rate than the sequence~\eqref{eq:npdit_scheduler}.
Indeed, the latter sequence starts from $\nu_0$ and increases exponentially until it reaches $1$ for $n\geq n_\text{bt}$.

Note that, for $n$ large enough, $\nu_n$ chosen as in equation \eqref{eq:litterature_scheduler} approaches $\nu_\infty$, while $\nu_n \to 1$ when chosen according to  \eqref{eq:npdit_scheduler} or \eqref{eq:bootstrap_scheduler}.
Therefore, the convergence of the non-stationary PNPD iteration \eqref{eq:PNPD_nonstat} follows from Theorem \ref{thm:convergence_pnpd} for the $\nu_n$ sequence \eqref{eq:litterature_scheduler}, while follows from Theorem~\ref{thm:convergence_npd} for the $\nu_n$ sequences in
\eqref{eq:npdit_scheduler} and \eqref{eq:bootstrap_scheduler}.
In particular, for the latter sequence \eqref{eq:bootstrap_scheduler} we have
$P_n = I$ for each $n\geq n_\text{bt}$; consequently, the method becomes equivalent to NPD when $n\geq n_\text{bt}$.
In conclusion, when
$\nu_n$ is defined as in equation \eqref{eq:litterature_scheduler}, the non-stationary PNPD method \eqref{eq:PNPD_nonstat}
computes the solution of the problem \eqref{eq:model_PNPD} with the matrix $S$ depending on $\nu_\infty$.
While, when
the $\nu_n$ sequence is chosen as in
\eqref{eq:npdit_scheduler} or \eqref{eq:bootstrap_scheduler},
it computes the solution of the model \eqref{eq:problem_leastsquares}
addressing the problem of replacing the $\ell^2$-norm in the data fidelity term with the norm induced by~$S$.

When we choose a non-stationarity preconditioner $P_n$, each iteration of the non-stationary PNPD method \eqref{eq:PNPD_nonstat} can be seen as a single step to solve the optimization problem
\begin{equation}\label{eq:modelSn}
    \argmin_{u\in\mathbb{R}^d} \frac{1}{2}\|Au-b^\delta\|^2_{S_n^{-1}} + \lambda\ h(Wu).
\end{equation}
Since the data fidelity term is a function of $S_n^{-1}$ while the regularization term does not depend on $n$, there is an implicit dependence on $n$ of the regularization parameter~$\lambda$. This can be noted, normalizing $S_n^{-1}$ in \eqref{eq:modelSn}, i.e., replacing it with $\frac{S_n^{-1}}{\|S_n^{-1}\|}$. With this change, the problem \eqref{eq:modelSn} becomes
\begin{equation*}
    \argmin_{u\in\mathbb{R}^d} \frac{1}{2}\|Au-b^\delta\|^2_{\frac{S_n^{-1}}{\|S_n^{-1}\|}} + \lambda\ h(Wu)\\
    =
    \argmin_{u\in\mathbb{R}^d} \frac{1}{2}\|Au-b^\delta\|^2_{S_n^{-1}} + \lambda_n\ h(Wu),
\end{equation*}
with $\lambda_n=\lambda\|S_n^{-1}\|$.
Again, for $n$ large enough, the convergence of $S_n$ follows the analysis above for $P_n$, and in particular, $S_n$ converges to the identity matrix for the $\nu_n$ sequences
\eqref{eq:npdit_scheduler} and \eqref{eq:bootstrap_scheduler}.

\section{Numerical results}
\label{sec:numerical_results}
This section presents numerical results for the PNPD method (Algorithm~\ref{alg:pnpd}). Specifically, we compare the results obtained with our proposed algorithm against those achieved with NPD (Algorithm \ref{alg:npd}) and NPDIT (Algorithm \ref{alg:npdit}).
An extensive comparison of NPDIT with accelerated versions of proximal gradient methods in the literature can be found in \cite{NPDIT}.

All the algorithms are implemented in Python 3.12.3 with NumPy 1.26.4 and all the code used to generate the plots in this section is available in a public Git repository\footnote{\url{https://github.com/Giuseppe499/PNPD}}. The experiments below were run on a PC with Kubuntu 24.04, equipped with a 4.75 GHz AMD Ryzen 7 7735HS processor and 16 GB of RAM.

To evaluate the performance of the methods, we use two metrics. The first is the \textit{Relative Reconstruction Error} (RRE), defined as
\begin{equation}
    \label{eq:RRE}
    \text{RRE}(x_{\text{rec}}) = \frac{\|x_{\text{true}} - x_{\text{rec}}\|}{\|x_{\text{true}}\|},
\end{equation}
where \(x_{\text{rec}}\) is the reconstructed image and \(x_{\text{true}}\) is the ground truth. The second metric is the \textit{Structural Similarity Index} (SSIM), which is computed using the \texttt{skimage.metrics.structural\_similarity} function from the \textit{scikit-image} library.

In this section, we often depict those performance metrics as a function of elapsed time. This is computed using the Python function \texttt{time.perf\_counter}. We do not count the initialization time, since it is comparable for all the considered algorithms and is negligible with respect to a single iteration of NPD.

Unless stated otherwise, we use the following parameter choices to ensure the convergence of the methods:

\begin{itemize}
    \item For NPD, PNPD, and NPDIT, we set \(\alpha_n = \alpha = 1\), and \(u_0 = b^{\delta}\).
    \item For NPD and PNPD, we set \(\beta_n = \beta = \frac{0.99}{8}\).
    \item For NPDIT, we set \(\beta_n = \beta = \frac{0.99\ \nu}{8}\).
\end{itemize}

When comparing the two preconditioning strategies, NPDIT and PNPD, we use a stationary version of the preconditioner, \(P = A^T A + \nu I\), where \(\nu > 0\) will be specified for each example. To provide a comprehensive analysis of the performance of our left preconditioning strategy, we also consider the non-stationary preconditioner \(P_n = (1-\nu_n)A^T A + \nu_n I\), where the sequence \(\{\nu_n\}_{n\in\mathbb{N}}\) is chosen as in equations \eqref{eq:litterature_scheduler}--\eqref{eq:bootstrap_scheduler}.

\def \ex {1}
\def \preposIm{a }
\def \imName {cameramen}
\def \psfComment{a Gaussian PSF with standard deviation $\sigma=2$ pixels}
\def \noise{0.01}
\def \noisePercent{1}
\def \lam {2\cdot10^{-4}}
\def \lamPNPD {2\cdot10^{-3}}

\subsection{Example \ex}
\label{sec:example\ex}

In this example, we considered a grayscale image of \preposIm\imName\ with dimensions $256 \times 256$. The blurred image $b$ was obtained using a Gaussian PSF with a $\sigma = 2$ pixels standard deviation. To generate the final observed image~$b^{\delta}$, we added white Gaussian noise $\eta_\delta$  generated using the NumPy function \texttt{numpy.random.normal}. $\eta_\delta$ is scaled to satisfy $\delta = \|\eta_\delta\| = \noise\|b\|$.
Figure \ref{fig:example_\ex problem} displays the ground truth image $x$, the PSF (shifted
to the center and cropped for better visualization), and the observed image $b^{\delta}$. In Figure
\ref{fig:example_\ex PNPD_comparison_reconstructions} we show
the reconstructions obtained with NPD, NPDIT, and PNPD after 10 iterations.

\def \folder{/example_\ex /problem_}
\begin{figure}
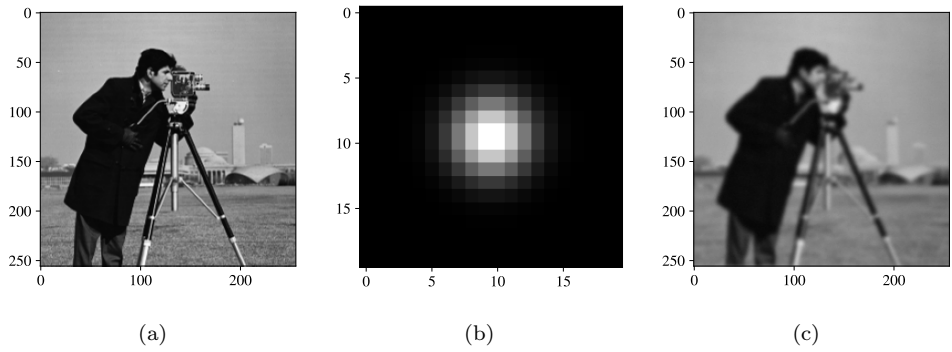

    \centering
    \makebox[\textwidth][c]{
        \begin{subfigure}{\tPlotWidth}
            \centering
            \includegraphics[width=\textwidth]{\folder ground_truth.pdf}
            \caption{\label{fig:example_\ex ground_truth}}
        \end{subfigure}
        \begin{subfigure}{\tPlotWidth}
            \centering
            \includegraphics[width=\textwidth]{\folder psf.pdf}
            \caption{\label{fig:example_\ex psf}}
        \end{subfigure}
        \begin{subfigure}{\tPlotWidth}
            \centering
            \includegraphics[width=\textwidth]{\folder observed.pdf}
            \caption{\label{fig:example_\ex observed}}
        \end{subfigure}
    }
    \caption{Example \ex: (\subref{fig:example_\ex ground_truth})
        Ground truth image of \preposIm\imName.
        (\subref{fig:example_\ex psf}) PSF used to blur the ground truth
        (center crop of size $20\times20$).
        (\subref{fig:example_\ex observed}) Observed image $b^\delta$
        obtained by adding white Gaussian noise on top of the
        discrete circular convolution of the ground truth and the PSF. The Gaussian noise $\eta_{\delta}$ is such that $\|\eta_{\delta}\| = \noise\|b^\delta\|$.}
    \label{fig:example_\ex problem}
\end{figure}

\def \folder{/example_\ex /PNPD_comparison/}
\def \filePrefix{reconstruction_it=10_}
\begin{figure}
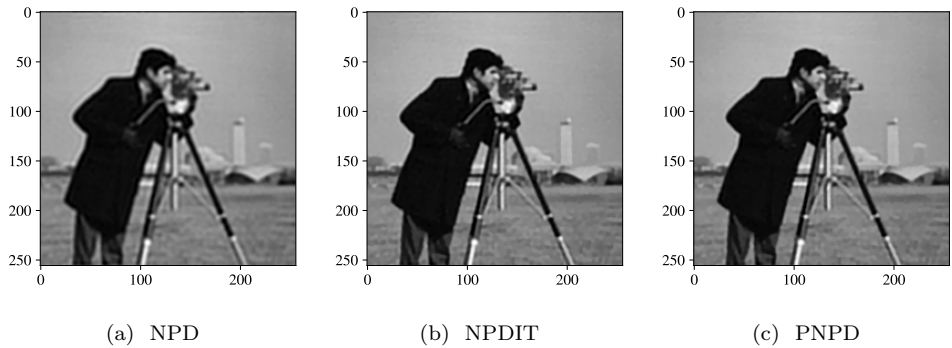

    \centering
    \makebox[\textwidth][c]{
        \begin{subfigure}{\tPlotWidth}
            \centering
            \includegraphics[width=\textwidth]{\folder\filePrefix NPD.pdf}
            \caption{
                \label{fig:example_\ex reconstruction NPD} NPD}
        \end{subfigure}
        \begin{subfigure}{\tPlotWidth}
            \centering
            \includegraphics[width=\textwidth]{\folder\filePrefix NPDIT.pdf}
            \caption{\label{fig:example_\ex reconstruction NPDIT} NPDIT}
        \end{subfigure}
        \begin{subfigure}{\tPlotWidth}
            \centering
            \includegraphics[width=\textwidth]{\folder\filePrefix PNPD.pdf}
            \caption{\label{fig:example_\ex reconstruction PNPD} PNPD}
        \end{subfigure}
    }
    \caption{Example \ex: Comparison of the reconstructions obtained with
    NPD, PNPD, and NPDIT after 10 iterations.
    The preconditioner parameter is $\nu=10^{-1}$.
    The number of nested loop iterations is $k_{\text{max}}=3$.
    The regularization parameter is $\lambda=2\cdot10^{-4}$ for NPD and
    NPDIT, while $\lambda=2\cdot10^{-3}$ for PNPD.}
    \label{fig:example_\ex PNPD_comparison_reconstructions}
\end{figure}

The comparison between the three different algorithms NPD, NPDIT, and PNPD is presented in Figure \ref{fig:example_\ex PNPD_comparison}. The performances of each method were measured through the RRE and SSIM functionals. Iteration-wise, we observe that PNPD and NPDIT exhibit similar
behaviors, both converging faster than NPD. This is due to the presence of the preconditioner $P$, which effectively enhances the speed of convergence of the algorithms. In terms of execution time, the PNPD strategy outperforms both the other methods. The gap between our proposal and NPDIT is due to the fact that as $k_{\mathrm{max}}$ increases, the NPDIT algorithm must compute more FFTs at each iteration. Indeed, the NPDIT method looks for approximate evaluations of $\prox^{P}_{\alpha h \circ W}$ while PNPD approximates $\prox_{\alpha h \circ W}$ in
the same manner as NPD. Therefore, NPDIT has to perform an extra
multiplication by $P^{-1}$ for each extra nested iteration compared to PNPD. This behavior is underlined in Table \ref{tab:example_\ex PNPD_NPDIT_k}, reporting the average time spent for one step of PNPD and NPDIT for different values of $k_{\text{max}}$. We can see
that $\Delta$, which is the difference between the average time spent for the two methods, increases as $k_{\text{max}}$ increases and,
more interestingly, the same happens for the ratio of the execution time of one
step of NPDIT and one of PNPD. The execution speed gap between the two methods increases to the point in which one iteration of PNPD with $k_{\text{max}}=8$ is faster than NPDIT with $k_{\text{max}}=1$.
\begin{figure}
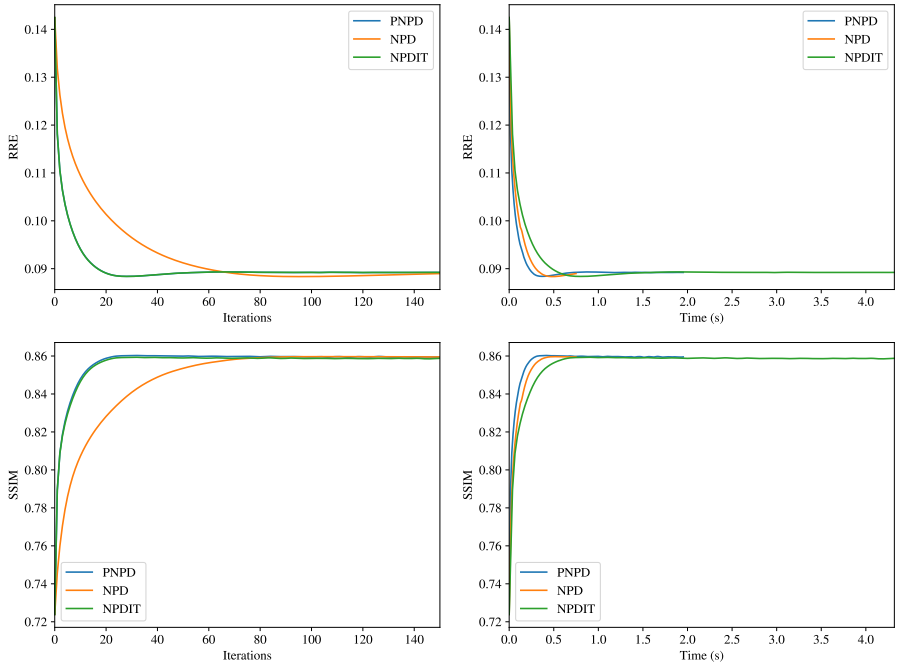

    \centering
    \makebox[\textwidth][c]{
        \begin{subfigure}{\qPlotWidth}
            \centering
            \includegraphics[width=\textwidth]{\folder RRE_iterations.pdf}
        \end{subfigure}
        \begin{subfigure}{\qPlotWidth}
            \centering
            \includegraphics[width=\textwidth]{\folder RRE_time.pdf}
        \end{subfigure}
    }
    \makebox[\textwidth][c]{
        \begin{subfigure}{\qPlotWidth}
            \centering
            \includegraphics[width=\textwidth]{\folder SSIM_iterations.pdf}
        \end{subfigure}
        \begin{subfigure}{\qPlotWidth}
            \centering
            \includegraphics[width=\textwidth]{\folder SSIM_time.pdf}
        \end{subfigure}
    }
    \caption{Example \ex: Comparison of the RREs and SSIMs between PNPD, NPD,
    and NPDIT. The preconditioner parameter is $\nu=10^{-1}$.
    The number of nested loop iterations is $k_{\text{max}}=1$ for NPD
    and $k_{\text{max}}=3$ for NPDIT and PNPD.
    The regularization parameter is $\lambda=2\cdot10^{-4}$ for NPD and
    NPDIT, while $\lambda=2\cdot10^{-3}$ for PNPD.}
    \label{fig:example_\ex PNPD_comparison}
\end{figure}

\begin{table}
    \centering
    \begin{tabular}{|c|c|c|c|c|}
\hline
   $k_{{max}}$ &      PNPD &     NPDIT &   $\Delta$ &   NPDIT/PNPD \\
\hline
             1 & 0.0107 & 0.0196 &  0.0088 &      1.822 \\
             2 & 0.0109 & 0.0245 &  0.0135 &      2.239 \\
             4 & 0.0125 & 0.0372 &  0.0247 &      2.971  \\
             8 & 0.0194 & 0.0617 &  0.0423 &      3.177 \\
            16 & 0.0239 & 0.0933 &  0.0694 &      3.894 \\
            32 & 0.0357 & 0.1508  &  0.1151  &      4.216 \\
            64 & 0.0584 & 0.2703  &  0.2119  &      4.629 \\
\hline
\end{tabular}
    \caption{Average time spent for one step of PNPD and NPDIT for different values of $k_{\text{max}}$. The difference between the execution time of the two methods is shown in the $\Delta$ column, while the last column reports the ratio of the execution time of a step of NPDIT and one of PNPD.}
    \label{tab:example_\ex PNPD_NPDIT_k}
\end{table}

\subsubsection{Stability}\label{ssec:stab}
While the previous analysis was dedicated to comparing PNPD with NPD, and NPDIT, we are now considering the stability properties of our proposal. In
Figure \ref{fig:example_\ex PNPD_nu_PNPD_k=1} we report the RREs and SSIMs obtained for PNPD with different values of the preconditioner parameter $\nu$ in equation (\ref{eq:pnpd_P_numerical}).  We can observe that as $\nu$ decreases the method becomes faster but also unstable. This behavior is justified from a theoretical viewpoint. Indeed, as $\nu$ approaches zero, the preconditioner $P = A^TA+\nu I$ tends to the positive semidefinite linear operator $A^TA$. Therefore, computing $P^{-1}$ we are trying to invert an almost singular operator and this introduces instability along the iterations. On the other hand, the preconditioner $P$ is converging to the true Hessian matrix of the differentiable term in the model problem \eqref{eq:problem_leastsquares}, resulting in faster convergence towards the minimum point. A trade-off between these two properties is clearly needed.

One possible strategy to overcome instability is to increase the number $k_{\text{max}}$ of nested iterations. In Figure \ref{fig:example_\ex PNPD_nu_PNPD} we show the
results obtained with PNPD with different values of $\nu$, but in this case, for
each different choice, we increased $k_{\text{max}}$ enough to prevent
instability. Even though a higher $k_{\text{max}}$ reflects in a higher iteration
execution time, a decrease in $\nu$ still results in a faster convergence.
\def \folder{/example_\ex /PNPD_nu/PNPD_k=1/}
\begin{figure}
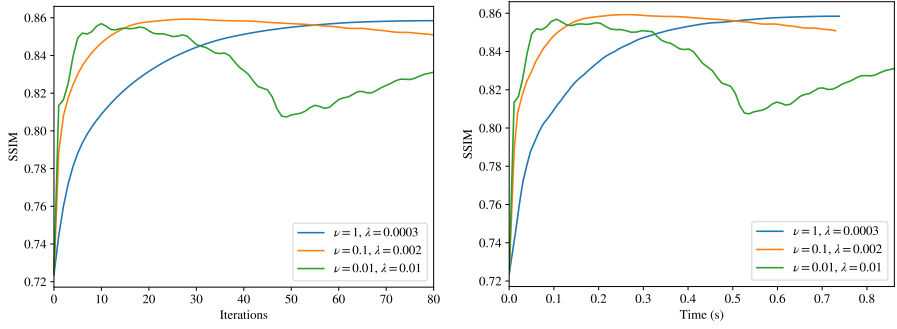

    \centering
    \makebox[\textwidth][c]{
        \begin{subfigure}{\qPlotWidth}
            \centering
            \includegraphics[width=\textwidth]{\folder SSIM_iterations.pdf}
        \end{subfigure}
        \begin{subfigure}{\qPlotWidth}
            \centering
            \includegraphics[width=\textwidth]{\folder SSIM_time.pdf}
        \end{subfigure}
    }
    \caption{Example \ex: Comparison of the SSIMs of PNPD with $k_{\text{max}}=1$ for
        different values of $\nu$ and $\lambda$.}
    \label{fig:example_\ex PNPD_nu_PNPD_k=1}
\end{figure}

\def \folder{/example_\ex /PNPD_nu/PNPD/}
\begin{figure}
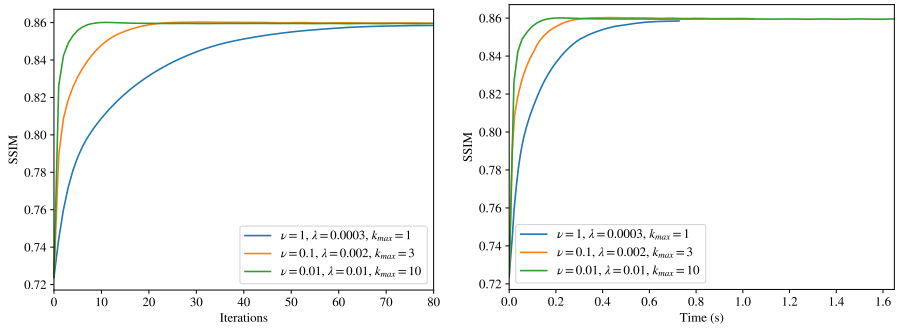

    \centering
    \makebox[\textwidth][c]{
        \begin{subfigure}{\qPlotWidth}
            \centering
            \includegraphics[width=\textwidth]{\folder SSIM_iterations.pdf}
        \end{subfigure}
        \begin{subfigure}{\qPlotWidth}
            \centering
            \includegraphics[width=\textwidth]{\folder SSIM_time.pdf}
        \end{subfigure}
    }
    \caption{Example \ex: Comparison of the SSIMs of PNPD for different
        values of $\nu$, $\lambda$, and $k_{\text{max}}$ set high enough
        to fix instability.}
    \label{fig:example_\ex PNPD_nu_PNPD}
\end{figure}

One other possible strategy to overcome instability along the iterations is to get rid of the extrapolation step obtaining the PNPD\_NE method defined~as
\begin{equation}
    \label{eq:PNPD_NE}
    u_{n+1} \approx \prox_{\alpha_n h \circ W}(u_n - \alpha_n P^{-1} \nabla f(u_n)).
\end{equation}
This method is less prone to instability than PNPD as depicted in Figure \ref{fig:example_\ex PNPD_nu_PNPD_NE_k=1}. Indeed,
differently from Figure \ref{fig:example_\ex PNPD_nu_PNPD_k=1}, even though $k_{\text{max}}=1$, PNPD\_NE does not show worrying instability even for $\nu=10^{-2}$. However, this improvement in stability comes at the cost of convergence speed.

Therefore, in the case in which PNPD shows
instability for a certain value of $\nu$, we have two different choices. We can increase $k_{\text{max}}$
(which also comes at the cost of CPU time)
or we can use PNPD\_NE instead of PNPD.
As an example, analyzing the case in which $\nu=0.01$, we can see from Figure \ref{fig:example_\ex PNPD_nu_PNPD} that PNPD with $k_{\text{max}}=10$ reaches an SSIM of $0.853$ after $0.079$ seconds or $4$ iterations, while PNPD\_NE (see Figure \ref{fig:example_\ex PNPD_nu_PNPD_NE_k=1}) after $0.093$ seconds or $8$ iterations.

\def \folder{/example_\ex /PNPD_nu/PNPD_NE_k=1/}
\begin{figure}
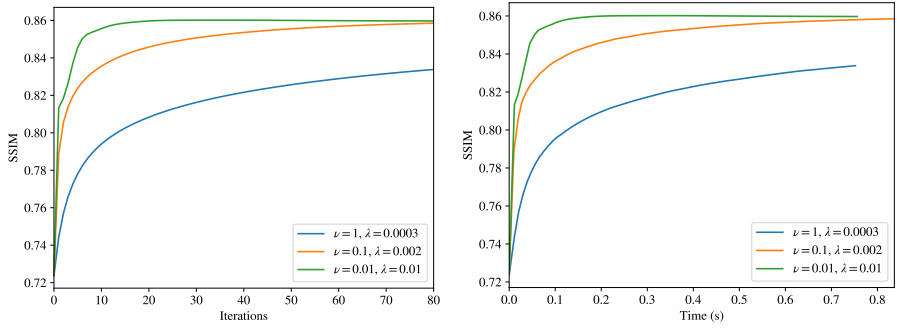

    \centering
    \makebox[\textwidth][c]{
        \begin{subfigure}{\qPlotWidth}
            \centering
            \includegraphics[width=\textwidth]{\folder SSIM_iterations.pdf}
        \end{subfigure}
        \begin{subfigure}{\qPlotWidth}
            \centering
            \includegraphics[width=\textwidth]{\folder SSIM_time.pdf}
        \end{subfigure}
    }
    \caption{Example \ex: Comparison of the SSIMs of PNPD\_NE
        with $k_{\text{max}}=1$ for different values of $\nu$ and $\lambda$.}
    \label{fig:example_\ex PNPD_nu_PNPD_NE_k=1}
\end{figure}

\subsubsection{Non--stationary Preconditioning}
Lastly, we compare the results obtained for PNPD with the
preconditioner sequences $\{P_n\}$ of the form in equation
\eqref{eq:non_stationary_preconditioner} with the different
sequences of $\{\nu_n\}_{n_\in\N}$ discussed in Subsection
\ref{subsec:PNPD_non_stationary}. For the choice
\eqref{eq:litterature_scheduler} we set $\nu_\infty=10^{-2}$,
for \eqref{eq:npdit_scheduler} and
\eqref{eq:bootstrap_scheduler} we set $\nu_0=10^{-2}$, and for \eqref{eq:bootstrap_scheduler} we set $n_\text{bt}=20$.
The compared stationary PNPD, cf.
\eqref{eq:litterature_scheduler}--\eqref{eq:bootstrap_scheduler},
is obtained with $\nu_n=\nu=10^{-1}$.

In Figure \ref{fig:example_\ex PNPD_non_stationary},
we observe that the decreasing sequence
\eqref{eq:litterature_scheduler} is slower at the beginning
and then accelerates, overtaking its stationary version.
On the other hand, the increasing sequence
\eqref{eq:npdit_scheduler},
has a fast convergence in the first iterations, but it slows
down, and it falls behind the stationary PNPD. The bootstrap
sequence \eqref{eq:bootstrap_scheduler} also exhibits an increasing behavior, resulting in fast convergence in the initial steps where the preconditioner acceleration is most noticeable. Furthermore, differently from \eqref{eq:npdit_scheduler}, we can see that \eqref{eq:bootstrap_scheduler} can outperform the stationary PNPD.

In Figure \ref{fig:example_\ex PNPD_bootstrap}, we tested the behavior of the bootstrap sequence \eqref{eq:bootstrap_scheduler} when the parameters $\nu_0$ and $n_\text{bt}$ are changed and are set unfavorably. We can see that by choosing $\nu_0$ too small $(\nu_0=10^{-3})$ the method becomes unstable in the first iterations, but, since it eventually becomes equivalent to NPD after a finite number of iterations, the method still converges to a solution of equation~\eqref{eq:problem_leastsquares}. If we instead choose a reasonable $\nu_0$, changing $n_\text{bt}$ can affect convergence speed but has a large margin of error. In particular, we tested $n_\text{bt}$ for the values, $5$, $20$ and $50$. We can see that the convergence speed increases as $n_\text{bt}$ increases. This is particularly noticeable when switching from $5$ to $20$, but is less impactful when changing from $20$ to $50$. Taking $n_\text{bt}$ too high results in the switch to NPD happening later and, therefore, if $\nu_0$ is chosen in a suboptimal way, the method could diverge at the beginning and will start to converge only when approaching $n_\text{bt}$.
\def \folder{/example_\ex /PNPD_non_stationary/}
\begin{figure}
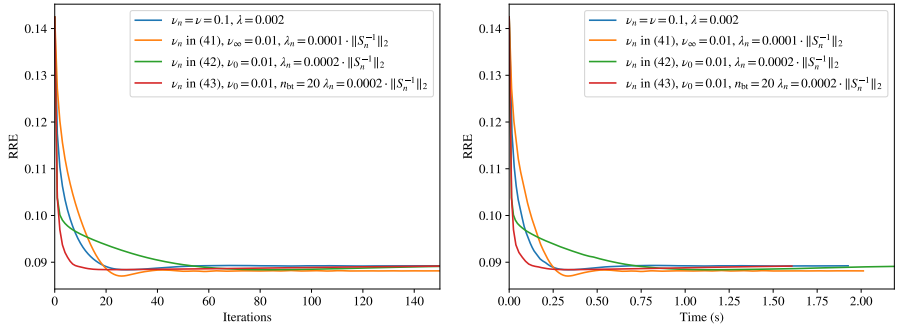

    \centering
    \makebox[\textwidth][c]{
        \begin{subfigure}{\qPlotWidth}
            \centering
            \includegraphics[width=\textwidth]{\folder RRE_iterations.pdf}
        \end{subfigure}
        \begin{subfigure}{\qPlotWidth}
            \centering
            \includegraphics[width=\textwidth]{\folder RRE_time.pdf}
        \end{subfigure}
    }
    \caption{Example \ex: Comparison of the RREs for the non-stationary version of PNPD and different sequences of $\nu_n$.
        For this test, we set $k_{\text{max}}=3$.}
    \label{fig:example_\ex PNPD_non_stationary}
\end{figure}

\def \folder{/example_\ex /PNPD_bootstrap/}
\begin{figure}
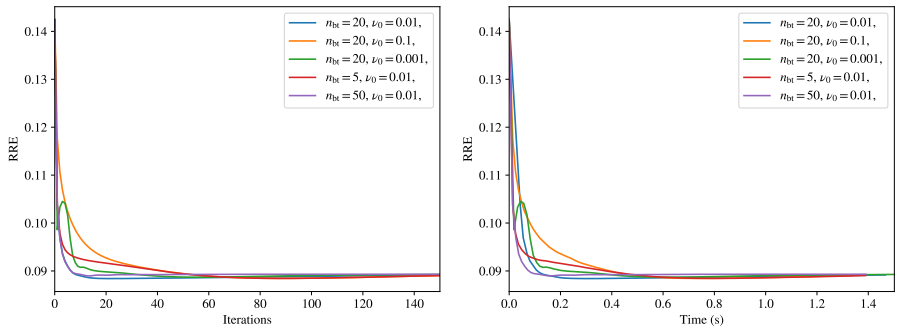

    \centering
    \makebox[\textwidth][c]{
        \begin{subfigure}{\qPlotWidth}
            \centering
            \includegraphics[width=\textwidth]{\folder RRE_iterations.pdf}
        \end{subfigure}
        \begin{subfigure}{\qPlotWidth}
            \centering
            \includegraphics[width=\textwidth]{\folder RRE_time.pdf}
        \end{subfigure}
    }
    \caption{Example \ex: Comparison of the RREs obtained with PNPD for the non-stationary bootstrap sequence of $\nu_n$ in \eqref{eq:bootstrap_scheduler}.
    For this test, we set $\lambda_n= 2\cdot10^{-4}\cdot \|S_n^{{-1}}\|$ and $k_{\text{max}}=3$.}
    \label{fig:example_\ex PNPD_bootstrap}
\end{figure}

\def \ex {2}
\def \preposIm{a }
\def \imName {cameramen}
\def \psfCommentNoProp{PSF with standard deviation $\sigma=2$ pixels}
\def \psfComment{a \psfCommentNoProp}
\def \noise{0.02}
\def \noisePercent{2}

\subsection{Example 2: cameraman with 2\% Gaussian noise}
\label{sec:example\ex}

In this example, we consider the same framework as in the previous one but increasing the noise level in the final observed image $b^{\delta}$ from $1\%$ to $2\%$.

To further analyze the performance of these algorithms, Figure \ref{fig:example_\ex PNPD_comparison} presents a comparison of the quality of the reconstructions at each step by computing the RRE and the SSIM. In both scenarios, the PNPD method outperforms the other strategies in terms of both the number of iterations required and CPU time.

\def \folder{/example_\ex /PNPD_comparison/}
\begin{figure}
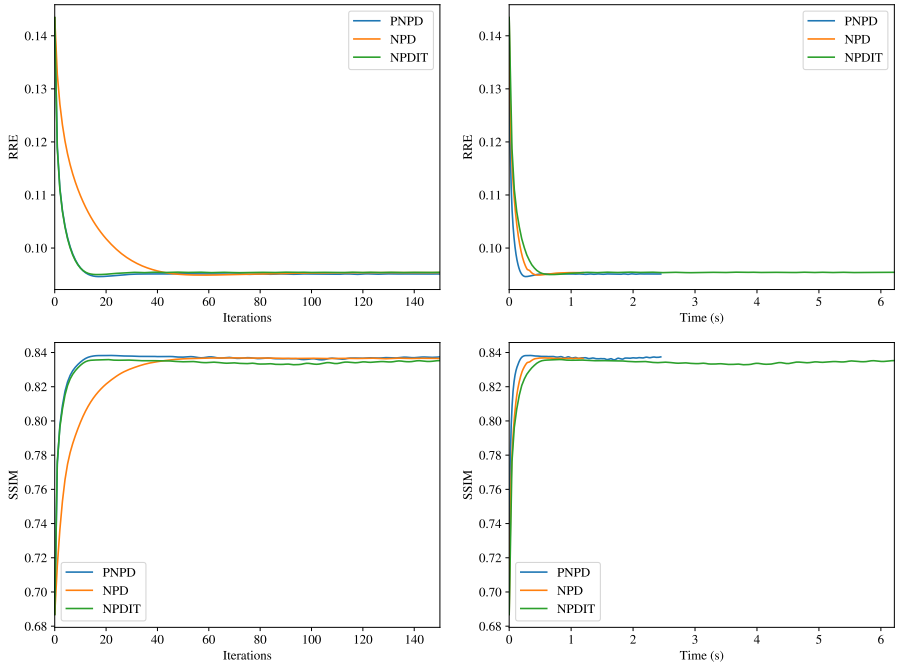

    \centering
    \makebox[\textwidth][c]{
        \begin{subfigure}{\qPlotWidth}
            \centering
            \includegraphics[width=\textwidth]{\folder RRE_iterations.pdf}
        \end{subfigure}
        \begin{subfigure}{\qPlotWidth}
            \centering
            \includegraphics[width=\textwidth]{\folder RRE_time.pdf}
        \end{subfigure}
    }
    \makebox[\textwidth][c]{
        \begin{subfigure}{\qPlotWidth}
            \centering
            \includegraphics[width=\textwidth]{\folder SSIM_iterations.pdf}
        \end{subfigure}
        \begin{subfigure}{\qPlotWidth}
            \centering
            \includegraphics[width=\textwidth]{\folder SSIM_time.pdf}
        \end{subfigure}
    }
    \caption{Example \ex: Comparison of the RREs and SSIMs between
    PNPD, NPD, and NPDIT.
    The preconditioner parameter is $\nu=10^{-1}$.
    The number of nested loop iterations is $k_{\text{max}}=2$
    for NPD and $k_{\text{max}}=5$ for NPDIT and PNPD.
    The regularization parameter is $\lambda=7\cdot10^{-4}$ for NPD and
    NPDIT, and is $\lambda=6\cdot10^{-3}$ for PNPD.}
    \label{fig:example_\ex PNPD_comparison}
\end{figure}

A further improvement of PNPD can be obtained by employing the
strategies discussed in the previous Example 1, as shown in
Figure \ref{fig:example_\ex PNPD_proposed}.
Here, in addition to the optimal stationary case presented in Figure
\ref{fig:example_\ex PNPD_comparison}, we consider a smaller $\nu$, increasing $k_{\text{max}}$ or using PNPD\_NE which is described in (\ref{eq:PNPD_NE}). In the same figure, we also consider the non-stationary PNPD with the bootstrap strategy, as defined in \eqref{eq:bootstrap_scheduler}.
As we already expected, the case without extrapolation is the slowest in terms of iterations. However, when there is uncertainty about the optimal parameters settings, the PNPD\_NE method appears to be a reasonable choice. This is because the method remains stable without the need to increase the number of nested iterations, and it allows for smaller values of $\nu$ in the preconditioner. Furthermore, in terms of CPU time, all the proposed strategies shows similar performances.

\def \folder{/example_\ex /PNPD_proposed/}
\begin{figure}
    \centering
    \makebox[\textwidth][c]{
        \begin{subfigure}{\qPlotWidth}
            \centering
            \includegraphics[width=\textwidth]{\folder RRE_iterations.pdf}
        \end{subfigure}
        \begin{subfigure}{\qPlotWidth}
            \centering
            \includegraphics[width=\textwidth]{\folder RRE_time.pdf}
        \end{subfigure}
    }
    \makebox[\textwidth][c]{
        \begin{subfigure}{\qPlotWidth}
            \centering
            \includegraphics[width=\textwidth]{\folder SSIM_iterations.pdf}
        \end{subfigure}
        \begin{subfigure}{\qPlotWidth}
            \centering
            \includegraphics[width=\textwidth]{\folder SSIM_time.pdf}
        \end{subfigure}
    }
    \caption{Example \ex: Comparison of the RREs and SSIMs between the proposed
    variants of PNPD. In particular, the results are obtained with PNPD,
    PNPD\_NE, and the bootstrap version of PNPD (PNPD\_BT),
    which uses the non-stationary sequence $\nu_n$ in
    \eqref{eq:bootstrap_scheduler} and $\lambda_n=\hat\lambda\cdot\|S_n^{{-1}}\|$.}
    \label{fig:example_\ex PNPD_proposed}
\end{figure}

\def \ex {3}
\def \preposIm{some }
\def \imName {peppers}
\def \psfComment{a motion blur PSF}
\def \noise{0.005}
\def \noisePercent{0.5}

\subsection{Example 3: peppers with 0.5\% Gaussian noise}
\label{sec:example\ex}
As a final example, we consider a different grayscale image of some peppers with dimensions $256 \times 256$, a motion blur PSF, and white Gaussian noise with an intensity level of $0.5\%$.

Figure \ref{fig:example_\ex problem} displays the ground truth $x$, the PSF, and the blurred image $b^{\delta}$. Figure \ref{fig:example_\ex PNPD_comparison_reconstructions} shows the reconstructions obtained with NPD, NPDIT, and PNPD after 5 iterations. We can observe that, even though the noise level is lower than in previous examples, the PNPD and NPDIT methods outperform the standard NPD strategy after just 5 iterations.
\def \folder{/example_\ex /problem_}
\begin{figure}
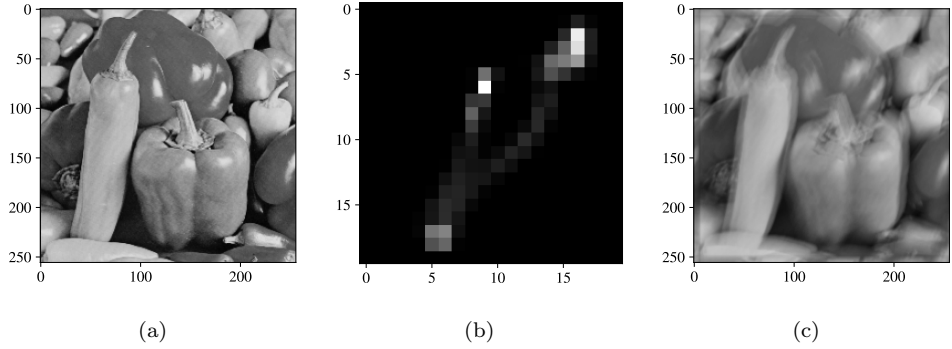

    \centering
    \makebox[\textwidth][c]{
        \begin{subfigure}{\tPlotWidth}
            \centering
            \includegraphics[width=\textwidth]{\folder ground_truth.pdf}
            \caption{\label{fig:example_\ex ground_truth}}
        \end{subfigure}
        \begin{subfigure}{\tPlotWidth}
            \centering
            \includegraphics[width=\textwidth]{\folder psf.pdf}
            \caption{\label{fig:example_\ex psf}}
        \end{subfigure}
        \begin{subfigure}{\tPlotWidth}
            \centering
            \includegraphics[width=\textwidth]{\folder observed.pdf}
            \caption{\label{fig:example_\ex observed}}
        \end{subfigure}
    }
    \caption{Example \ex: (\subref{fig:example_\ex ground_truth})
        Ground truth image of \preposIm\imName.
        (\subref{fig:example_\ex psf}) PSF used to blur the ground truth
        (center crop of size $20\times20$).
        (\subref{fig:example_\ex observed}) Observed image $b^\delta$
        obtained by adding white Gaussian noise on top of the
        discrete circular convolution of the ground truth and the PSF. The Gaussian noise $\eta_{\delta}$ is such that $\|\eta_{\delta}\| = \noise\|b^\delta\|$.}
    \label{fig:example_\ex problem}
\end{figure}

\def \folder{/example_\ex /PNPD_comparison/}
\def \filePrefix{reconstruction_it=5_}
\begin{figure}
    \centering
    \makebox[\textwidth][c]{
        \begin{subfigure}{\tPlotWidth}
            \centering
            \includegraphics[width=\textwidth]{\folder\filePrefix NPD.pdf}
            \caption{
                \label{fig:example_\ex reconstruction NPD} NPD}
        \end{subfigure}
        \begin{subfigure}{\tPlotWidth}
            \centering
            \includegraphics[width=\textwidth]{\folder\filePrefix NPDIT.pdf}
            \caption{\label{fig:example_\ex reconstruction NPDIT} NPDIT}
        \end{subfigure}
        \begin{subfigure}{\tPlotWidth}
            \centering
            \includegraphics[width=\textwidth]{\folder\filePrefix PNPD.pdf}
            \caption{\label{fig:example_\ex reconstruction PNPD} PNPD}
        \end{subfigure}
    }
    \caption{Example \ex: Comparison of the reconstructions obtained with
    NPD, PNPD, and NPDIT after 5 iterations.
    The preconditioner parameter is $\nu=10^{-2}$.
    The number of nested loop iterations is $k_{\text{max}}=2$.
    The regularization parameter is $\lambda=10^{-4}$ for NPD and
    NPDIT, and is $\lambda=6\cdot10^{-3}$ for PNPD.}
    \label{fig:example_\ex PNPD_comparison_reconstructions}
\end{figure}

This is particularly noticeable in Figure \ref{fig:example_\ex PNPD_comparison}, where we compared the RREs and SSIMs obtained with PNPD, NPD, and NPDIT. Iteration-wise, PNPD and NPDIT exhibit similar behaviors, both converging faster than NPD. However, in terms of CPU time, PNPD shows a slight improvement over NPDIT.

\begin{figure}
    \centering
    \makebox[\textwidth][c]{
        \begin{subfigure}{\qPlotWidth}
            \centering
            \includegraphics[width=\textwidth]{\folder RRE_iterations.pdf}
        \end{subfigure}
        \begin{subfigure}{\qPlotWidth}
            \centering
            \includegraphics[width=\textwidth]{\folder RRE_time.pdf}
        \end{subfigure}
    }
    \makebox[\textwidth][c]{
        \begin{subfigure}{\qPlotWidth}
            \centering
            \includegraphics[width=\textwidth]{\folder SSIM_iterations.pdf}
        \end{subfigure}
        \begin{subfigure}{\qPlotWidth}
            \centering
            \includegraphics[width=\textwidth]{\folder SSIM_time.pdf}
        \end{subfigure}
    }
    \caption{Example \ex: Comparison of the RREs and SSIMs between PNPD, NPD,
    and NPDIT. The preconditioner parameter is $\nu=10^{-2}$.
    The number of nested loop iterations is $k_{\text{max}}=1$ for NPD
    and $k_{\text{max}}=2$ for NPDIT and PNPD.
    The regularization parameter is $\lambda=10^{-4}$ for NPD and
    NPDIT, and is $\lambda=6\cdot10^{-3}$ for PNPD.}
    \label{fig:example_\ex PNPD_comparison}
\end{figure}

Similarly to Figure \ref{fig:example_2PNPD_proposed} in Example 2, Figure \ref{fig:example_\ex PNPD_proposed} compares different parameter settings for the PNPD method.
Again, we observe that, when the optimal parameter choice is known, the stationary case remains the best among all possibilities.
In the stationary case, when choosing $\nu$ too small, PNPD becomes unable to achieve the same performance metrics as the optimal case. For example, PNPD with $\nu=10^{-2}$ achieves an SSIM of $0.935$ after 50 iterations, while PNPD and PNPD\_NE with $\nu=10^{-3}$ both achieve a lower SSIM of $0.92$.
Instead, the non-stationary PNPD with the bootstrap sequence~\eqref{eq:bootstrap_scheduler} (PNPD\_BT), achieves the same SSIM as the optimal stationary case.
PNPD\_BT, although slightly slower in the initial iterations compared to other considered cases, performs nearly as well as the optimal stationary case.

\def \folder{/example_\ex /PNPD_proposed/}
\begin{figure}
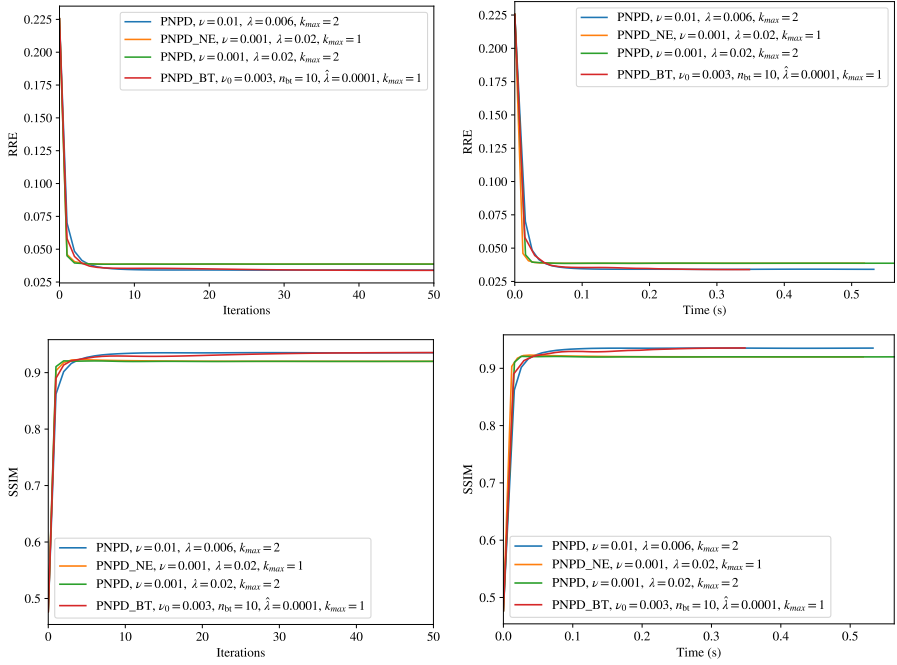

    \centering
    \makebox[\textwidth][c]{
        \begin{subfigure}{\qPlotWidth}
            \centering
            \includegraphics[width=\textwidth]{\folder RRE_iterations.pdf}
        \end{subfigure}
        \begin{subfigure}{\qPlotWidth}
            \centering
            \includegraphics[width=\textwidth]{\folder RRE_time.pdf}
        \end{subfigure}
    }
    \makebox[\textwidth][c]{
        \begin{subfigure}{\qPlotWidth}
            \centering
            \includegraphics[width=\textwidth]{\folder SSIM_iterations.pdf}
        \end{subfigure}
        \begin{subfigure}{\qPlotWidth}
            \centering
            \includegraphics[width=\textwidth]{\folder SSIM_time.pdf}
        \end{subfigure}
    }
    \caption{Example \ex: Comparison of the RREs and SSIMs between the proposed
    variants of PNPD. In particular, we show results obtained with PNPD,
    PNPD\_NE, and the bootstrap version of PNPD (PNPD\_BT)
    which uses the non-stationary sequence $\nu_n$ in
    \eqref{eq:bootstrap_scheduler} and $\lambda_n=\hat\lambda\cdot\|S_n^{{-1}}\|$.}
    \label{fig:example_\ex PNPD_proposed}
\end{figure}

\section{Conclusions}\label{sec:concl}
Inspired by the NPDIT method recently proposed in \cite{NPDIT}, we investigated preconditioning strategies for proximal gradient methods applied to image deblurring problems. We proved that, for this particular application, the NPDIT method can be interpreted as the right preconditioning. Therefore, we proposed a left preconditioning method to reduce the number of evaluations of the preconditioner, and thus the CPU time, at each iteration. Moreover, we explored some strategies to improve the stability of the proposed PNPD method preserving the fast convergence in the first iterations. Numerical results on image deblurring problems with white Gaussian noise confirm the advantages of PNPD over NPDIT. 

Interesting future investigations concern other choices of the preconditioning matrix, particularly when our proposal might be computationally expensive to invert, like in computed tomography applications. Moreover, the role of preconditioning combined with other extrapolation strategies, see \cite{buccini2020general}, deserves further investigation.

\section*{Acknowledgments}
The first, second, and fourth authors are partially supported by the PRIN project 2022ANC8HL funded by the Italian Ministry of University and Research.
The first and second authors are partially supported by the "INdAM - GNCS Project", code CUP\_E53C23001670001. This research was also supported by the Swiss National Science Foundation SNSF via the
projects Stress-Based Methods for Variational Inequalities in Solid Mechanics n. 186407 and ExaSolvers n. 162199.

\bibliographystyle{spmpsci}
\bibliography{bibliografia}

\end{document}